\newtheorem{theorem}{Theorem}[section]
\newtheorem{prop}[theorem]{Proposition}
\newtheorem{lemma}[theorem]{Lemma}
\newtheorem{question}[theorem]{Question}
\newtheorem{conj}[theorem]{Conjecture}
\newtheorem{mainthm}{Theorem}
\theoremstyle{definition}
\newtheorem{definition}[theorem]{Definition}
\newtheorem{remark}[theorem]{Remark}
\numberwithin{equation}{section}
\newcommand{\C}{\mathbb{C}}
\newcommand{\N}{\mathbb{N}}
\newcommand{\Q}{\mathbb{Q}}
\newcommand{\Z}{\mathbb{Z}}
\newcommand{\OO}{\mathcal{O}}
\newcommand{\PP}{\mathbb{P}}
\newcommand{\wt}{\widetilde}
\newcommand{\Hom}{\operatorname{Hom}}
\newcommand{\rank}{\operatorname{rank}}
\tikzstyle{vertex}=[circle, draw, inner sep=0pt, minimum size=5pt]
\begin{document}

\title[On compact complex surfaces with finite homotopy rank sum]{On compact complex surfaces with\\finite 
homotopy rank-sum}

\author[I. Biswas]{Indranil Biswas}

\address{Department of Mathematics, Shiv Nadar University, NH91, Tehsil
Dadri, Greater Noida, Uttar Pradesh 201314, India}

\email{indranil.biswas@snu.edu.in, indranil29@gmail.com}

\author[B. Hajra]{Buddhadev Hajra}

\address{School of Mathematics, Tata Institute of Fundamental
Research, Homi Bhabha Road, Mumbai 400005, India}

\email{hajrabuddhadev92@gmail.com}

\subjclass[2010]{14F35, 14F45, 14J10, 55P20, 55R10}

\keywords{Rational homotopy, elliptic homotopy type, Eilenberg-MacLane space, Stein space, K{\"a}hler manifold, Kodaira dimension}

\begin{abstract}
A topological space (not necessarily simply connected) is said to have \emph{finite homotopy rank-sum} if the sum of the ranks of all higher homotopy groups (from the second homotopy group onward) is finite. In this article, we characterize the smooth compact complex K{\"a}hler surfaces having finite homotopy rank-sum. We also prove the Steinness of the universal cover of these surfaces assuming holomorphic convexity of the universal cover.
\end{abstract}

\maketitle

\tableofcontents

\section{Introduction}\label{se1}

The higher homotopy groups of any topological space $X$ are always abelian, and for $i\,\geq\,2$, $\rank(\pi_i(X))$ is defined to be 
the dimension of the $\Q$-vector space $\pi_i(X)\otimes_\Z\Q$, which need not be finite for a finite CW complex. For example, 
$\rank(\pi_2(X))$ of the wedge sum $S^1\vee S^2$ is infinite as its universal cover is the real line with a copy of $S^2$ attached to every integer. However, J.-P. Serre proved that for a simply 
connected finite CW complex $X$, all the higher homotopy groups are finitely generated. More generally, if $X$ is a finite CW complex with $\pi_1(X)$ a finite group, all the higher homotopy groups of $X$ are finitely generated. 

For any path connected topological space $X$, the graded $\Q$-vector spaces $$\bigoplus_{i\,\geq\, 2}\pi_i(X)\otimes_\Z\Q \ \ \text{ and } \ \ \bigoplus_{i\,\geq\, 0} H^i(X;\,\Q)$$ are denoted by $\pi_\ast(X;\,\Q)$ and $H^\ast(X;\,\Q)$ respectively.

A simply connected topological space $X$ is of \emph{rationally elliptic homotopy type} (cf. \cite[Part VI, \S~32]{FHT}) if it satisfies the following two conditions:
\begin{align}
 &\dim (\pi_\ast(X;\, {\mathbb Q}))
\,:=\,\sum\limits_{i\,\geq\,2}\rank(\pi_i(X))\,=\,\sum\limits_{i\,\geq\,2}\dim \pi_i(X)\otimes_\Z \Q\,<\, \infty
,\label{Defn: Finite homotopy rank-sum}\\
\text{and}&\nonumber\\
&\dim(H^\ast(X;\,\Q))\,:=\,\sum\limits_{i\,\geq\, 0}\dim(H^i(X;\,\Q))\,=\,\sum\limits_{i\,\geq\,0}b_i(X)\,<\,\infty.
\end{align}

A topological space $X$ (not necessarily simply connected) is said to have \emph{finite homotopy rank-sum} or is said to \emph{satisfy the finite 
homotopy rank-sum property} if $\sum_{i\,\geq\,2}\dim \pi_i(X)\otimes_\Z \Q \,<\, \infty$ (see \eqref{Defn: Finite homotopy rank-sum}).

An example of elliptic homotopy type topological space is the real $n$-sphere $S^n$ with $n\, \geq\,1$. Any complex rational homogeneous space is of elliptic homotopy type. Simply connected compact K{\"a}hler manifolds of the elliptic homotopy type of complex dimension up to $3$ were described in \cite{Amo-Bis}.

However, once the simply connectedness condition is dropped, characterizing the compact complex surfaces with finite homotopy rank-sum becomes 
significantly more challenging compared to the results presented in \cite[Theorem 1.1]{Amo-Bis}. Our primary aim in this article is 
to provide a characterization of compact complex (K{\"a}hler) surfaces possessing the finite homotopy rank-sum property.

Similarly, a recent work by the authors in \cite{BH} has characterized complex Stein surfaces exhibiting an elliptic homotopy type.

\subsection{Main results}

We first prove the following result which generalizes \cite[Theorem 1.1]{Amo-Bis}. 

\begin{mainthm}[{Theorem \ref{Thm: Characterization of quasi-elliptic compact surfaces with finite fundamental group}}]\label{Thm A}
Let $X$ be a smooth compact complex surface having a finite fundamental group satisfying the finite homotopy rank-sum property. Then $X$ is in fact simply connected and of the rationally elliptic homotopy type.

Consequently, $X$ is one of the surfaces listed below:
\begin{enumerate}
\item[(a)] the complex projective plane $\C\PP^2$;

\item[(b)] Hirzebruch surfaces $\mathbb{S}_h\,=\,\PP(\mathcal{O}_{\C\PP^1} \oplus \mathcal{O}_{\C\PP^1}(h))$, where
$h\,\geq\,0$ is an integer; and
\item[(c)] Hirzebruch's fake quadrics (see Remark \ref{re1}), if they exist.
\end{enumerate}
\end{mainthm}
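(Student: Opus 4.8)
The plan is to pass to the universal cover and exploit the multiplicativity of the holomorphic Euler characteristic. Since $\pi_1(X)$ is finite, the universal covering $\widetilde{X}\to X$ is a finite holomorphic covering, so $\widetilde{X}$ is again a smooth compact complex surface; it is simply connected and satisfies $\pi_i(\widetilde X)\cong\pi_i(X)$ for $i\ge 2$, hence it still has finite homotopy rank-sum, and being a closed manifold it has finite Betti numbers. Thus $\widetilde X$ is simply connected with both $\dim\pi_\ast(\widetilde X;\,\Q)<\infty$ and $\dim H^\ast(\widetilde X;\,\Q)<\infty$, i.e.\ it is of rationally elliptic homotopy type. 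The first step I would record is the structural consequence of ellipticity for a closed simply connected $4$-manifold: since $\chi(\widetilde X)=2+b_2(\widetilde X)>0$, the homotopy Euler characteristic $\chi_\pi=\sum_i(-1)^i\rank(\pi_i(\widetilde X))$ vanishes, which forces an equal count of even and odd homotopy generators; together with Poincaré duality in formal dimension $4$ this yields the bound $b_2(\widetilde X)\le 2$ (equivalently, $4$-manifolds with $b_2\ge 3$ are rationally hyperbolic).

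Next I would compute $\chi(\OO_{\widetilde X})$. As $\widetilde X$ is simply connected, $b_1(\widetilde X)=0$ is even, so $\widetilde X$ is K{\"a}hler and the irregularity is $q(\widetilde X)=\tfrac12 b_1=0$. From the Hodge decomposition $b_2=2p_g+h^{1,1}$ with $h^{1,1}\ge 1$, combined with $b_2(\widetilde X)\le 2$, one gets $p_g(\widetilde X)=0$, whence $\chi(\OO_{\widetilde X})=1-q+p_g=1$. Likewise $q(X)=0$ (finiteness of $\pi_1(X)$ forces $b_1(X)=0$), so $\chi(\OO_X)=1+p_g(X)$ is a positive integer. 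Because $\chi(\OO)=\tfrac1{12}(c_1^2+c_2)$ and the Chern numbers multiply by the degree of an étale cover, the holomorphic Euler characteristic is multiplicative, giving $1=\chi(\OO_{\widetilde X})=|\pi_1(X)|\cdot\chi(\OO_X)=|\pi_1(X)|\,(1+p_g(X))\ge|\pi_1(X)|$. Hence $|\pi_1(X)|=1$: the surface $X$ is simply connected, equal to $\widetilde X$, and of rationally elliptic homotopy type. This is the step that automatically rules out nontrivial free quotients and makes the return from $\widetilde X$ to $X$ immediate.

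For the explicit list I would run the Enriques--Kodaira classification under the constraints $b_2(X)\le 2$, $p_g=q=0$, $\chi(\OO_X)=1$ and $c_2(X)=2+b_2\in\{3,4\}$. If $\operatorname{kod}(X)=-\infty$ then $X$ is rational (a ruling over a curve of genus $\ge 1$ would give infinite $\pi_1$), and a rational surface with $b_2\le 2$ is exactly $\C\PP^2$ or a Hirzebruch surface $\mathbb{S}_h$, yielding (a) and (b). The cases $\operatorname{kod}(X)\in\{0,1\}$ are excluded, since $c_1^2=0$ forces $c_2=12\chi(\OO)=12$, i.e.\ $b_2=10>2$ (this also removes K3 and Enriques surfaces, whose relevant covers have $b_2=22$). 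Finally, if $\operatorname{kod}(X)=2$ then $c_1^2+c_2=12$ with $c_2\in\{3,4\}$: the value $b_2=1$ gives $c_1^2=9=3c_2$, forcing a ball quotient with infinite $\pi_1$ and hence excluded, while $b_2=2$ produces the numerical data $c_1^2=8,\ c_2=4,\ p_g=q=0$ of a fake quadric, which is case (c).

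I expect the two real obstacles to be the following. First, justifying the topological dichotomy $b_2\le 2$ for rationally elliptic simply connected $4$-manifolds in a self-contained way, via the even/odd generator balance encoded in $\chi_\pi=0\Leftrightarrow\chi>0$ and the Poincaré duality property of the minimal Sullivan model. Second, the general-type endgame, where one must identify the sole surviving numerical possibility as a simply connected fake quadric and square this with the delicate existence question addressed in Remark \ref{re1}. Everything else is a bookkeeping pass through Noether's formula and the Bogomolov--Miyaoka--Yau inequality.
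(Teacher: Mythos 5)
Your proposal is correct, and it shares the paper's skeleton: pass to the finite universal covering $p\,:\,\wt{X}\,\longrightarrow\,X$, observe that $\wt{X}$ is a simply connected rationally elliptic compact (K\"ahler) surface, use multiplicativity of a characteristic number under the finite cover to force $\deg p\,=\,1$, and then classify. But both key ingredients are genuinely different. To kill $\pi_1(X)$, the paper first applies the classification of \cite[Theorem 1.1]{Amo-Bis} to $\wt{X}$, reads off $e(\wt{X})\,\in\,\{3,\,4\}$, compares with $e(X)\,=\,2+b_2(X)\,\geq\,3$ (K\"ahlerness of $X$ giving $b_2(X)\,\geq\,h^{1,1}(X)\,\geq\,1$), and concludes from $e(\wt{X})\,=\,\deg p\cdot e(X)$; you instead bound $b_2(\wt{X})\,=\,\rank\pi_2(\wt{X})\,\leq\,2$ directly from the elliptic inequality $\sum_{k\geq 1}2k\,\rank\pi_{2k}(\wt{X})\,\leq\,4$ in formal dimension $4$ --- for this step you should cite \cite[Corollary 1.3]{Fri-Hal} or \cite[Part VI, \S~32]{FHT} (the same result the paper uses in its Theorem 2.3) rather than rely on your sketched $\chi_\pi=0$/Poincar\'e-duality argument --- then deduce $\chi(\OO_{\wt{X}})\,=\,1$ and $\chi(\OO_X)\,=\,1+p_g(X)\,\geq\,1$ by Hodge theory, and finish by multiplicativity of $\chi(\OO)$, which is indeed valid for arbitrary compact complex surfaces via Noether's formula and \'etale multiplicativity of Chern numbers. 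For the final list, the paper simply cites \cite[Theorem 1.1(a)--(c)]{Amo-Bis}, whereas your Enriques--Kodaira run under the constraints $p_g\,=\,q\,=\,0$, $\chi(\OO_X)\,=\,1$, $c_2(X)\,\in\,\{3,\,4\}$ in effect reproves that quoted result. What each buys: the paper's route is shorter but leans twice on \cite{Amo-Bis}; yours is self-contained modulo standard surface theory (the even-$b_1$ K\"ahler criterion, Noether's formula, Bogomolov--Miyaoka--Yau together with Yau's equality case \cite{Yau}), and it establishes simple connectivity before any classification is invoked, which is conceptually cleaner. Two small points to tighten: in the $\kappa(X)\,=\,-\infty$ case you must also exclude class VII surfaces, which have $b_1\,=\,1$ and hence are not simply connected; and in the $\kappa(X)\,\in\,\{0,\,1\}$ cases the precise statement is $c_1^2(X)\,\leq\,0$ (equality only for minimal surfaces), which with $c_1^2+c_2\,=\,12\chi(\OO_X)\,=\,12$ gives $b_2(X)\,\geq\,10$, the desired contradiction.
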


\begin{remark}\label{re1}
Let $X$ be a projective surface of the general type with $$q(X)\,=\,p_g(X)\,=\,0,\,\, c_1^2=K_X^2=8,\,\,\text{and}\,\,c_2(X)
\,=\,e(X)\,=\,4.$$
Such a surface $X$ is very often referred to as \emph{fake quadrics}. Fake quadrics do exist. In fact, all
fake quadrics whose bi-canonical map $\Phi_{|2K_X|}$ is of degree $2$ are classified
by M. Mendes Lopes and R. Pardini in \cite{Lop-Par}; the fundamental group of each of these fake quadrics
is non-trivial. Many fake quadrics with bi-canonical maps of degree $1$ were found by Bauer, Catanese,
Grunewald, and Pignatelli \cite{B-C-G-P}. They all have an infinite fundamental
group and their universal cover is the bi-disk $\mathbb{H}\times \mathbb{H}$. Hirzebruch asked if a simply-connected
fake quadric exists. This question is still open. By Freedman's theorem, any simply connected fake quadric is either
homeomorphic to the Hirzebruch surface $\mathbb{S}_1$ or homeomorphic to the (minimal) quadric $\mathbb{S}_0\,=\,\C\PP^1\times \C\PP^1$.
\end{remark}

Next, we prove the following result in the case of smooth compact complex K{\"a}hler surfaces having infinite fundamental groups and satisfying the finite homotopy rank-sum property.

\begin{mainthm}[Theorem \ref{Thm: Characterization of quasi-elliptic compact surfaces with infinite fundamental group2}]\label{Thm B}
Let $X$ be a smooth compact complex K{\"a}hler surface having an infinite fundamental group whose universal cover $\wt{X}$ is
holomorphically convex. Then $X$ satisfies the finite homotopy rank-sum property if and only if one of the following statements hold:
\begin{enumerate}
\item $X$ is an Eilenberg-MacLane $K(\pi,\,1)$-space with $\pi_1(X)$ being a $\rm{PD(4)}$-group;

\item $\wt{X}$ is homotopic to $S^2$, the real $2$-sphere.
\end{enumerate}
Consequently, $\wt{X}$ is a Stein manifold.
\end{mainthm}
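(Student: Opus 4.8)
The plan is to transfer the problem to the universal cover and then to extract the homotopy type of $\wt{X}$ from its holomorphic convexity via the Remmert reduction. First I would record that higher homotopy groups are invariant under passing to covers, so $\pi_i(X)\cong\pi_i(\wt{X})$ for all $i\ge 2$ and $\wt{X}$ is simply connected; hence $X$ has finite homotopy rank-sum if and only if $\sum_{i\ge 2}\dim\pi_i(\wt{X})\otimes_\Z\Q<\infty$. The reverse implication is then immediate. In case (1) the space $\wt{X}$ is contractible, being the universal cover of an aspherical space, so all higher homotopy groups vanish and the rank-sum is $0$; in case (2) the sphere $S^2$ is rationally elliptic with $\dim\pi_\ast(S^2;\Q)=2$ (ranks only in degrees $2$ and $3$), so the rank-sum equals $2$. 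Both are finite.

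For the forward implication I would use that a holomorphically convex $\wt{X}$ admits a Remmert reduction $r\colon\wt{X}\to S$, a proper surjection with connected fibers onto a normal Stein space with $\dim_\C S\in\{1,2\}$, the dimension being nonzero because $\wt{X}$ is noncompact (as $\pi_1(X)$ is infinite). Since $r$ is proper with connected fibers and $\wt{X}$ is simply connected, $S$ is simply connected as well, and I would then split on $\dim_\C S$. When $\dim_\C S=2$ the map $r$ is a proper modification contracting the compact, negative-definite curves forming its positive-dimensional fibers; as a simply connected Stein surface has the homotopy type of a $2$-dimensional CW complex, it is homotopy equivalent to a wedge of $2$-spheres, and adjoining the exceptional curves (themselves trees of $\PP^1$'s) keeps $\wt{X}$ homotopy equivalent to a finite wedge $\bigvee^{b_2}S^2$, where $b_2=\dim\pi_2(\wt{X})\otimes_\Z\Q<\infty$ by the finite rank-sum hypothesis and Hurewicz. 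The engine is now the rational dichotomy of \cite{FHT}: a wedge of $b_2\ge 2$ copies of $S^2$ is rationally hyperbolic, so its total rational homotopy is infinite; finiteness of the rank-sum forces $b_2\le 1$, giving $\wt{X}$ contractible (case (1)) or homotopy equivalent to $S^2$ (case (2)).

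The \emph{main obstacle} is the case $\dim_\C S=1$, where $S$ is a simply connected Stein Riemann surface, hence $\C$ or the disk $\D$, in either case contractible, and $r$ exhibits $\wt{X}$ as a proper family of compact curves over a contractible base. Away from its discrete singular locus this is a fiber bundle with generic fiber a compact curve $F$, and the homotopy exact sequence shows $\pi_1(\wt{X})$ is a quotient of $\pi_1(F)$; when $F=\PP^1$ one gets $\wt{X}\simeq S^2$ (case (2)), matching the ruled examples. The difficulty is to exclude or correctly account for fibers of genus $\ge 1$, where $F$ is aspherical, the naive bundle picture contradicts simple connectivity, and the contributions of singular and multiple fibers must be controlled. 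I expect to resolve this by descending $r$ to a fibration of $X$ itself onto a curve and invoking the Enriques--Kodaira classification together with the finite rank-sum hypothesis: a positive-genus (in particular elliptic) fibration should produce either rationally hyperbolic homotopy, which is excluded, or an aspherical $X$, which is case (1). In the aspherical case, $X$ is a closed orientable aspherical $4$-manifold, so $\pi_1(X)$ is a $\mathrm{PD}(4)$-group.

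Finally, for the Steinness assertion I would again work through the Remmert reduction: it suffices to prove $r$ is a biholomorphism, equivalently that $\wt{X}$ carries no positive-dimensional compact analytic subset. In case (1) this is immediate, since $\wt{X}$ is contractible with $H_2(\wt{X};\R)=0$, while the pullback of a K\"ahler form integrates positively over any compact curve; hence no compact curve exists, $r$ is an isomorphism, and $\wt{X}=S$ is Stein. The remaining point is case (2), where a compact curve generating $H_2(\wt{X})$ could in principle persist; here I would identify $\wt{X}$ with a Stein surface of the elliptic homotopy type in the sense of \cite{BH} by verifying that the classification obtained above leaves $r$ of fiber dimension zero, which completes the argument.
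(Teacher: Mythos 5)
Your reverse implication and your case~(1) Steinness argument (contractibility gives $H_2(\wt{X};\R)=0$, while any compact curve would pair positively with the pulled-back K\"ahler form, so the Remmert reduction is a biholomorphism) are fine --- the latter is even more self-contained than the paper's citation of \cite[Theorem 6.2]{GGH}. But the forward implication has genuine gaps, and the paper's proof is structured quite differently. First, in your $\dim_\C S=2$ case the claim that the contracted fibers are ``trees of $\PP^1$'s'' is false: Grauert's criterion makes a compact curve contractible precisely when its intersection matrix is negative definite, with no constraint on the genera of its components (e.g.\ the exceptional curve of the resolution of a cone over a higher-genus curve), so your wedge-of-spheres conclusion does not follow as stated (it can be repaired via Hamm's homotopy-dimension theorem for the normal Stein space $S$ and the long exact sequence of the contraction, but that argument is missing). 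Second, the $\dim_\C S=1$ case --- which you rightly call the main obstacle --- is left as a plan (``I expect to resolve this by descending\dots''), and multiple fibers and positive-genus fibers are exactly where the work lies. The paper avoids your entire reduction-geometry analysis for the equivalence: it first shows $H^\ast(\wt{X};\Q)$ is finite-dimensional (Hurewicz for $H_2,H_3$; $H^4(\wt{X})=0$ by noncompactness), so $\wt{X}$ is rationally elliptic, then applies Friedlander--Halperin's bound $\sum_{k\geq 1}2k\,\rank\pi_{2k}(\wt{X})\leq 3$ to get $\rank\pi_2(X)\leq 1$; torsion-freeness of $\pi_2$ (Remark \ref{Rem: pi_2 of a 4-manifold is torsion-free}), Gromov's one-endedness of infinite K\"ahler groups and \cite[Lemma 2.1]{BMP} handle $\rank\pi_2=0$, while $\rank\pi_2=1$ gives a Moore space $M(\Z,2)$ using $H_3(\wt{X};\Z)=0$ from \cite[Proposition 3.7]{GP} under holomorphic convexity.

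The most serious gap is the Steinness in case~(2), which is the actual content of the theorem and which you do not prove: ``verifying that the classification obtained above leaves $r$ of fiber dimension zero'' is circular, since Steinness \emph{is} the statement that the reduction has no positive-dimensional fibers. The paper's two ideas here are absent from your proposal: when $\dim Y=2$, a contracted curve $C$ has $C^2<0$ by Grauert, the exponential sequence shows $c_1(\mathcal{O}_{\wt{X}}(C))$ is a nonzero multiple of the generator of $H^2(\wt{X};\Z)\cong\Z$, and the infinitely many pairwise disjoint $\pi_1(X)$-translates of $C$ force the proportionality $n\deg(\mathcal{O}_{\wt{X}}(C_1)|_C)=m\deg(\mathcal{O}_{\wt{X}}(C)|_C)$, i.e.\ $mC^2=0$, a contradiction; when $\dim Y=1$, the paper cites \cite[\S~2, Theorem 1]{Gur2} to get $b_2(\wt{X})$ infinite, contradicting $\wt{X}\simeq S^2$. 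Note finally that your own aside --- that the fiber-$\PP^1$ subcase of $\dim_\C S=1$ ``matches the ruled examples'' --- is in direct tension with what you are asked to prove: for a non-rational ruled surface, $\wt{X}\cong\PP^1\times(\C$ or $\D)$ is holomorphically convex and homotopic to $S^2$ but manifestly not Stein. So this subcase must be \emph{excluded}, not accommodated, and your proposal leaves it standing (this tension is also a reason to scrutinize whether the citation of \cite{Gur2} in the paper's Case 2 really covers $\PP^1$-fibrations without singular fibers; but in any event your text supplies no argument at this point).
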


The classification of smooth complex compact surfaces which are $K(\pi,\,1)$-spaces is known due to S. R. Gurjar--P. Pokale (cf. \cite{GP}). Therefore, our next result characterizes the smooth complex projective surfaces which are uniformized by a smooth complex analytic manifold (of complex dimension $2$) homotopic to $S^2$. This gives a full characterization for the non-general type smooth projective surfaces. However, we characterized such smooth projective surfaces of general type based on certain assumptions. The result says the following:

\begin{mainthm}[Theorem \ref{Thm: Characterization of compact surfaces having infinite fundamental group and sphere as universal cover}]\label{Thm C}
Let $X$ be a smooth complex projective surface having an infinite fundamental group. Then the following
statements hold:
\begin{enumerate}
\item If $\kappa(X) \,\leq\, 1$ and $\rank \pi_2(X)\,=\,1$, then $X$ is one of the following two types:
\begin{enumerate}
\item $\kappa(X)\,=\,-\,\infty$ and $X$ is a non-rational ruled surface;

\item $\kappa(X)\,=\,1$ and $X$ admits an elliptic fibration $f\,:\,X\,\longrightarrow\, D$ with $D\,\cong\,
\PP^1$, whose every fiber is an elliptic curve with reduced structure, such that $f$ has at most $3$ multiple fibers satisfying the additional condition that
if the number of singular fibers is three, then their multiplicities form a platonic triple.
\end{enumerate}
In both of the above cases, the universal cover $\widetilde X$ is homotopic to $S^2$.

\item If $\kappa(X)\,=\,2$ with $\wt{X}$ being homotopic to $S^2$, the following two hold:
\begin{enumerate}
\item If $G\,:=\,\pi_1(X)$ and $H^4(G;\,\Q)\,=\,H^5(G;\,\Q)=0$, then 
\[
H^i(G;\,\Q)\,=\,
\begin{cases}
\Q & \text{if } i\,=\,0 \,\text{ or }\, 2,\\
0 & \text{otherwise},
\end{cases}
\]
and $X$ must be a fake quadric with $\wt{X}$ homotopic to $S^2$, if it exists\footnote{All fake quadric surfaces known
so far are uniformized by some complex contractible manifold. Therefore all these known examples are in
fact examples of $K(\pi,\,1)$-surfaces.}.
\item $\pi_1(X)$ is non-abelian.
\end{enumerate}
\end{enumerate}
\end{mainthm}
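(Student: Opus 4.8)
The plan is to run the Enriques--Kodaira classification according to $\kappa(X)$, using two homotopical tools throughout. The first is the identification $\pi_2(X)=\pi_2(\wt X)=H_2(\wt X;\Z)$ coming from the universal cover together with the Hurewicz theorem; thus $\rank\pi_2(X)=1$ means $H_2(\wt X;\Q)=\Q$, and $\wt X\simeq S^2$ is its geometric realisation. The second is the homotopy fibration $\wt X\to X\to K(G,1)$ with $G=\pi_1(X)$: when $\wt X\simeq S^2$ this is an oriented $S^2$-fibration (the deck action on $H^2(S^2)=\Z$ is trivial, since $G$ acts holomorphically and fixes the positive generator), so its Gysin sequence with Euler class $e\in H^3(G;\Q)$ becomes the computational engine for part (2).

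For part (1) I would first clear away the easy strata. Abelian and bielliptic surfaces have contractible universal cover $\C^2$, hence $\pi_2=0$; K3 surfaces are simply connected and Enriques surfaces have $\pi_1=\Z/2$; so no $\kappa=0$ surface has infinite $\pi_1$ together with $\rank\pi_2=1$. For $\kappa=-\infty$ the rational surfaces are simply connected, leaving the non-rational ruled surfaces $X=\PP(E)\to C$ with $g(C)\ge1$; there $\wt C$ is $\C$ or $\mathbb{H}$, every bundle on it is trivial, so $\wt X\cong\wt C\times\PP^1\simeq S^2$, giving case (1a). The substantial case is $\kappa=1$. Writing the elliptic fibration $f\colon X\to\PP^1$ with base orbifold $\PP^1(m_1,\dots,m_k)$, an orbifold homotopy exact sequence relates $\pi_2(X)$ to the second homotopy group of the base orbifold and to $\pi_1$ of the elliptic fibre; since the latter is $\Z$ when the orbifold is spherical (orbifold universal cover $S^2$) and $0$ when it is Euclidean or hyperbolic, $\rank\pi_2=1$ forces a spherical base, i.e. $k\le3$ with a platonic triple of multiplicities, and then $\wt X$ is a $\C$-bundle over $S^2$, homotopy equivalent to $S^2$, with $\pi_1(X)$ virtually $\Z^2$ and hence infinite. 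I expect the delicate point here to be reconciling this spherical-orbifold conclusion with $\kappa(X)=1$ and with the absence of reducible or nodal fibres (whose rational components would lift to extra independent spheres in $\wt X$): controlling the contribution of the multiple fibres to both $\kappa$ and to $H_2(\wt X)$ is, I believe, the main obstacle in part (1).

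For part (2), with $\kappa=2$ and $\wt X\simeq S^2$, I would first note that $X$ is minimal: a $(-1)$-curve is simply connected and would have infinitely many disjoint $G$-translates lifting to $\wt X$, producing infinitely many independent classes in $H_2(\wt X)$ and contradicting $H_2(\wt X)=\Z$. Feeding the hypotheses $H^4(G;\Q)=H^5(G;\Q)=0$ into the Gysin sequence, the segment at $k=4$ gives $H^2(G)\cong H^4(X)=\Q$, while the $k=2,3$ segments with Poincar\'e duality on $X$ give $b_2=2-d_3$ and $b_3=b_1$, where $d_3:=\dim H^3(G)=\dim\mathrm{im}(\cup e|_{H^0(G)})\in\{0,1\}$. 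Hence $\chi(X)=4-2b_1-d_3$. The key extraction is $b_1=0$: since $X$ is K\"ahler, $b_1$ is even, and since $X$ is of general type, $\chi(X)=c_2(X)>0$, so $2b_1+d_3\le3$ forces $b_1\le1$ and parity then gives $b_1=0$, i.e. $q(X)=0$ and $H^1(G)=0$.

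It remains to eliminate $d_3=1$. In that case $b_2=1$ and $c_2=\chi(X)=3$; with $q=0$ Noether's formula gives $c_1^2=9$, so $c_1^2=3c_2$, and $X$ being minimal of general type, equality in the Bogomolov--Miyaoka--Yau inequality forces $X$ to be a ball quotient with contractible universal cover $\mathbb{B}^2$ --- impossible since $\wt X\simeq S^2$. Therefore $d_3=0$; the periodicity coming from the $k\ge5$ Gysin maps then kills all higher $H^{\ge3}(G;\Q)$, so $H^\ast(G;\Q)=H^\ast(S^2;\Q)$. Consequently $\chi(X)=4$, $b_2=2$, $q=p_g=0$, $\chi(\OO_X)=1$ and $c_1^2=8$: a fake quadric. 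For (2b) I would argue by contradiction: if $G$ were infinite abelian it would contain $\Z^r$ with $r\ge1$ of finite index, giving a finite \'etale cover $X'$ of general type with $\pi_1(X')=\Z^r$ and $\wt{X'}\simeq S^2$; multiplicativity of the Euler characteristic for $S^2\to X'\to(S^1)^r$ yields $c_2(X')=\chi(S^2)\cdot\chi((S^1)^r)=0$, contradicting $c_2>0$ for a surface of general type, so $\pi_1(X)$ is non-abelian. I expect the two hardest points overall to be the fibre-by-fibre homotopy analysis in the $\kappa=1$ case and the clean derivation of $b_1=0$ in (2a); the ball-quotient step, resting on equality in Bogomolov--Miyaoka--Yau, is the decisive input ruling out the spurious value $b_2=1$.
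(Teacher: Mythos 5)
Your proposal reproduces the paper's architecture almost exactly where the real weight is carried, and diverges in two places --- once to your advantage, once leaving a hole. For part (2a), your Gysin sequence for $\wt{X}\to X\to K(G,1)$ \emph{is} the paper's Cartan--Leray spectral sequence $H^p(G;\,H^q(\wt{X};\,\Q))\Rightarrow H^{p+q}(X;\,\Q)$, which degenerates to the same long exact sequence because $H^q(\wt{X};\,\Q)$ is concentrated in degrees $0$ and $2$; your chain of deductions --- $H^2(G;\,\Q)\cong H^4(X;\,\Q)\cong\Q$, positivity of $c_2$ from minimality plus Bogomolov--Miyaoka--Yau, evenness of $b_1$, Noether's formula, and Yau's theorem at $c_1^2=3c_2$ to kill the spurious case $b_2=1$ --- is precisely the paper's, with your dichotomy $d_3\in\{0,1\}$ appearing there as whether the map $H^2(X;\,\Q)\to H^2(\wt{X};\,\Q)^G$ is surjective or zero. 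Your minimality argument is the paper's Lemma (quoting [GP, Prop.~4.3]). The genuine differences: (i) for $\kappa=-\infty$ you get $\wt{X}\simeq S^2$ directly as an $S^2$-bundle over the contractible $\wt{C}$, which is simpler than the paper's route through holomorphic convexity, $H_3(\wt{X};\,\Z)=0$ [GP, Prop.~3.7], and the Moore-space argument; (ii) for (2b) your Euler-characteristic multiplicativity in the fibration $S^2\to X'\to (S^1)^r$, giving $c_2(X')=0$ against $c_2>0$, is a clean and correct alternative to the paper's argument, which instead uses the periodicity isomorphism $H^{k+3}(G;\,\Q)\cong H^k(G;\,H^2(\wt{X};\,\Q))$ to force $\binom{m}{3}=0$, hence $m=2$, hence $e(X)=b_2-2\le 0$. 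Both of your variants work and are arguably tidier.

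The gap is the $\kappa=1$ case of part (1). The paper fills this case entirely by citing [GP, Theorem~5.8]: if $\chi(\OO_X)=e(X)\neq 0$ then $\rank\pi_2(X)$ exceeds $1$ (in fact is infinite), and when $e(X)=0$ the only way to get $\pi_2(X)\cong\Z$ is base $\PP^1$ with at most three multiple fibers, platonic when there are exactly three. Your orbifold exact sequence is indeed the idea behind that theorem, but the step you yourself flag as ``the main obstacle'' --- that any fiber which is not a multiple of a smooth elliptic curve forces infinitely many independent spherical classes in $\wt{X}$, so that $e(X)=0$ and the orbifold analysis applies --- is the entire content of the citation, and your proposal does not supply it. (Your instinct that the spherical-orbifold conclusion sits uneasily with $\kappa(X)=1$ is sound: by the canonical bundle formula, $\chi(\OO_X)=0$ over $\PP^1$ with a platonic configuration gives negative Kodaira dimension, so the substance of case (1b) lives entirely inside the quoted theorem and the paper's bookkeeping, not in anything you could reconstruct loosely.) A second, smaller caveat: your parenthetical justification that $G$ acts trivially on $H^2(\wt{X};\,\Q)$ (``acts holomorphically and fixes the positive generator'') is not a proof --- $G$ acts on a $4$-manifold that is merely homotopy equivalent to $S^2$, and holomorphy alone does not pin down the sign of the action on $H^2$. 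The paper makes the same identification $H^p(G;\,H^2(\wt{X};\,\Q))=H^p(G;\,\Q)$ silently, so you are no worse off, but a complete argument would have to address this point rather than assert it.
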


The following is a consequence of Theorem \ref{Thm C}.

\begin{mainthm}[Proposition \ref{Prop: No compact surface of general type having fundamental group of a compact curve and sphere as universal cover}]\label{Thm D}
Let $X$ be a smooth projective surface such that its universal cover is homotopic to $S^2$. Then the
following two statements hold:
\begin{enumerate}
\item If $X$ contains a smooth rational curve, then $X$ must be a non-rational ruled surface.

\item If $G\, :=\,\pi_1(X)$ is a surface group, then $X$ is either a non-rational ruled surface, or
it is an elliptic surface admitting a relatively minimal elliptic fibration
$\varphi\,:\, X \,\longrightarrow\, D$, with $D\,\cong \, \PP^1$, such that $\chi(X,\,\mathcal{O}_X)\,=\,0$,
and $\varphi$ has at most three multiple fibers with multiplicities forming a platonic triple whenever there are exactly three multiple fibers.
\end{enumerate}
\end{mainthm}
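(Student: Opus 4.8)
The plan is to deduce both statements from Theorem~\ref{Thm C}. Write $\pi\colon \wt X\to X$ for the universal cover and $G=\pi_1(X)$. First I would check that Theorem~\ref{Thm C} applies. Since $\wt X\simeq S^2$ is a (connected) complex surface with $H^4(\wt X;\Z)\cong H^4(S^2;\Z)=0$, whereas any compact complex surface has $H^4=\Z$, the cover $\wt X$ is non-compact and hence $G$ is infinite. Moreover $\pi_i(X)\cong\pi_i(\wt X)\cong\pi_i(S^2)$ for $i\ge 2$, so $\rank\pi_2(X)=1$. Thus $X$ falls under case (1a) or (1b) of Theorem~\ref{Thm C} when $\kappa(X)\le1$, or under case (2) when $\kappa(X)=2$.

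For part (1), the crux is the following claim: \emph{every smooth rational curve $C\subset X$ satisfies $C^2=0$}. To prove it I lift $C$ to $\wt X$. As $C\cong\C\PP^1$ is simply connected, each component of $\pi^{-1}(C)$ maps isomorphically onto $C$, and $G$ permutes these pairwise disjoint components freely; fix one, $\wt C\cong\C\PP^1$. Because $\pi$ is a local biholomorphism, $\wt C^2=C^2$. Now pull back a Kähler form $\omega$ on the projective surface $X$ to $\wt\omega=\pi^{*}\omega$ on $\wt X$. For every deck transformation $g$ one has $g^{*}\wt\omega=\wt\omega$, so $\wt C$ and $g\wt C$ have equal positive $\wt\omega$-area; since $H_2(\wt X;\Q)\cong\Q$ and this area pins down the class, it follows that $[\wt C]=[g\wt C]$ in $H_2(\wt X;\Z)\cong\Z$. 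Choosing $g\neq e$, the curves $\wt C$ and $g\wt C$ are disjoint, so evaluating $c_1(\OO_{\wt X}(\wt C))$ on the two equal classes gives $\wt C^2=\deg\!\big(\OO_{\wt X}(\wt C)|_{\wt C}\big)=\deg\!\big(\OO_{\wt X}(\wt C)|_{g\wt C}\big)=0$, proving the claim. By adjunction $K_X\cdot C=-2-C^2=-2<0$. If $X$ were in case (1b) or (2) then $\kappa(X)\ge1$, so a minimal model $\sigma\colon X\to X_{\min}$ has $K_{X_{\min}}$ nef; writing $K_X=\sigma^{*}K_{X_{\min}}+(\text{effective})$ and using that $C$ is not exceptional (as $C^2=0$), the projection formula gives $K_{X_{\min}}\cdot\sigma_{*}C\le K_X\cdot C=-2<0$, contradicting nefness. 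Hence only case (1a) survives and $X$ is a non-rational ruled surface.

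For part (2), let $G=\pi_1(\Sigma_h)$ be a surface group; since $G$ is infinite we have $h\ge1$, and $\Sigma_h$ is a two-dimensional $K(G,1)$, so $H^{i}(G;\Q)\cong H^{i}(\Sigma_h;\Q)$. In particular $H^{4}(G;\Q)=H^{5}(G;\Q)=0$ while $H^{1}(G;\Q)=\Q^{2h}\neq0$. Were $X$ in case (2) of Theorem~\ref{Thm C}, part (2a) would apply (its hypothesis $H^4=H^5=0$ holds) and force $H^1(G;\Q)=0$, a contradiction. Hence $\kappa(X)\le1$ and $X$ is in case (1a) (non-rational ruled) or case (1b). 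In case (1b) the elliptic fibration $X\to D\cong\C\PP^1$ has all fibers smooth elliptic curves, hence no reducible fibers, so it is relatively minimal, and its multiple-fiber and platonic-triple conditions are exactly those recorded in Theorem~\ref{Thm C}. Finally $\chi(X,\OO_X)=0$: for a relatively minimal elliptic surface $K_X^2=0$, so Noether's formula gives $12\,\chi(\OO_X)=e(X)$, while $e(X)=\sum_b e(X_b)=0$ because every fiber is a torus and so has Euler number $0$. This yields the two asserted possibilities.

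The main obstacle is the self-intersection claim in part (1): the decisive point is that the deck group acts trivially on the classes of lifted curves in $H_2(\wt X;\Z)\cong\Z$, for which the Kähler form pulled back from the projective surface $X$ is the essential tool. Once $C^2=0$ is established, the rest of part (1) is routine surface theory combined with Theorem~\ref{Thm C}. In part (2) the only genuine step is the cohomological exclusion of the general-type case, which is immediate from the finiteness of the cohomological dimension of a surface group.
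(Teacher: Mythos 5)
Your proof is correct, but it is routed differently from the paper's in part (1), so a comparison is worth recording. The paper proves part (1) \emph{without} invoking Theorem \ref{Thm C}: it first notes that $X$ is minimal (Lemma \ref{Lem: Minimality of sphere uniformized compact complex surfaces}) and that $C^2=0$ by the translate argument already used in Case 1 of the proof of Theorem \ref{Thm: Characterization of quasi-elliptic compact surfaces with infinite fundamental group2} --- there the triviality of the deck action on $H^2(\wt{X};\,\Z)\cong\Z$ is extracted from the proportionality of the Chern classes $c_1(\mathcal{O}_{\wt{X}}(\wt C))$ and $c_1(\mathcal{O}_{\wt{X}}(g\wt C))$, rather than from your deck-invariant K{\"a}hler form; the two devices do the same job --- and it then feeds the smooth rational curve with $C^2=0$ into the proof of \cite[Ch.~V, Prop.~4.3]{BHPV} to produce the $\PP^1$-bundle structure directly, with $C$ as a fiber. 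You instead use adjunction ($K_X\cdot C=-2$) and nefness of the canonical class on a minimal model to exclude $\kappa(X)\geq 0$, and then quote Theorem \ref{Thm C}(1a); this is equally valid, and trades the explicit construction of the ruling for a reduction to the classification already proved. Part (2) follows the paper's proof essentially verbatim: exclusion of general type via Theorem \ref{Thm C}(2a) together with the cohomology of a surface group, then Theorem \ref{Thm C}(1); your extra derivation of relative minimality and of $\chi(X,\,\mathcal{O}_X)=0$ via Noether's formula replaces the paper's appeal to \cite[Theorem 5.8]{GP}, which is built into the proof of Theorem \ref{Thm C}(1b).

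Two small points should be patched. First, the assertion that $G$ permutes the components of $\pi^{-1}(C)$ \emph{freely} needs its one-line reason: if $g\wt{C}=\wt{C}$ for some $g\neq e$, then $g$ restricts to a fixed-point-free automorphism of $\wt{C}\cong\C\PP^1$, which does not exist; this is what guarantees the existence of a disjoint translate. Second, in the paper ``surface group'' means the fundamental group of an arbitrary Riemann surface, possibly open, so you must also treat the case where $G$ is a nontrivial free group; your cohomological contradiction survives unchanged, since such a $G$ has cohomological dimension $1$, with $H^1(G;\,\Q)\neq 0$ and $H^2(G;\,\Q)=0$, both incompatible with the output of Theorem \ref{Thm C}(2a).
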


Carlson and Toledo conjectured the following. We refer the reader \cite{Kol} for more details about this conjecture.

\begin{conj}[Carlson--Toledo Conjecture]
Let $G$ be an infinite K{\"a}hler group, i.e., the fundamental group of a smooth complex K{\"a}hler manifold. Then virtually the second Betti number $b_2(G)$ of $G$ is positive, i.e., there exists a finite index subgroup $G^\circ$ of $G$ such that the second Betti number $b_2(G^\circ)$ of $G^\circ$ is positive.
\end{conj}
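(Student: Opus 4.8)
The plan is to verify the Carlson--Toledo conjecture for exactly the class of surfaces analyzed in this article---smooth compact K\"ahler surfaces $X$ with infinite fundamental group, holomorphically convex universal cover $\wt X$, and the finite homotopy rank-sum property---and in fact to establish the non-virtual conclusion $b_2(G)\geq 1$ for $G:=\pi_1(X)$, so that no passage to a proper finite-index subgroup is needed. For an arbitrary infinite K\"ahler group the conjecture remains open, and the obstruction is precisely the absence of any structural handle on the universal cover; Theorem~\ref{Thm B} removes this obstruction in our setting by splitting the problem into two concrete cases. Accordingly, I would split into the two alternatives it provides: either $X$ is a $K(\pi,1)$ with $G$ a $\mathrm{PD}(4)$-group, or $\wt X$ is homotopy equivalent to $S^2$.

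In the aspherical case the argument is immediate: $H^\ast(G;\Q)\cong H^\ast(X;\Q)$, so $b_2(G)=b_2(X)$, and since $X$ is a compact K\"ahler surface the class of a K\"ahler form is a nonzero element of $H^2(X;\R)$ (its top power integrates to the positive symplectic volume). Hence $b_2(G)=b_2(X)\geq 1$, and the conjecture holds on the nose.

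In the case $\wt X\simeq S^2$ I would pass to the Borel fibration $S^2\simeq\wt X\to X\to K(G,1)$ attached to the universal covering---valid because $G$ acts freely, so the homotopy quotient is $X$ itself---and run the rational Leray--Serre spectral sequence $E_2^{p,q}=H^p\bigl(G;H^q(S^2;\Q)\bigr)$. As $H^1(S^2;\Q)=0$, only the rows $q=0,2$ are nonzero, the top row carrying the orientation module on which $G$ acts through a character $G\to\{\pm1\}$. In total degree $2$ the sole possible differential is $d_3\colon E_3^{0,2}\to E_3^{3,0}$, so the associated graded of $H^2(X;\Q)$ gives
\[
b_2(X)\;=\;b_2(G)\;+\;\dim\ker\!\bigl(d_3\colon H^2(S^2;\Q)^{G}\longrightarrow H^3(G;\Q)\bigr),
\]
and since $\dim H^2(S^2;\Q)^{G}\leq 1$ this yields the robust bound $b_2(G)\geq b_2(X)-1$. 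It then suffices to show $b_2(X)\geq 2$ for every surface of this type, and here I would invoke the classification of Theorem~\ref{Thm C} together with the numerical identity $b_2=c_2-2+2b_1$: a non-rational ruled surface has $c_2=4-4g$ and $b_1=2g$, so $b_2=2$; a fake quadric has $c_2=4$ and $b_1=0$, so $b_2=2$; and a $\kappa=1$ elliptic surface over $\PP^1$ whose fibers are reduced elliptic curves has $c_2=0$, whence $b_2=2b_1-2$, which the K\"ahler constraint $b_1=2q(X)>0$ forces to be at least $2$. In each instance $b_2(X)\geq 2$, so $b_2(G)\geq 1$.

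The hard part will be the last case of the classification---the $\kappa=1$ elliptic family---where the inequality $b_2(X)\geq 2$ hinges on two points that deserve care: that a multiple fiber whose reduction is a smooth elliptic curve contributes nothing to $c_2(X)$ (so that $c_2(X)=e(\PP^1)\cdot e(\text{elliptic fiber})=0$), and that the K\"ahler hypothesis forces the irregularity $q(X)$ to be strictly positive, ruling out the degenerate value $b_2=0$. The ruled and fake-quadric cases are purely numerical and present no difficulty, and the spectral-sequence bound $b_2(G)\geq b_2(X)-1$ is formal. Once $b_2(X)\geq 2$ is certified across all three types, the conclusion $b_2(G)\geq 1$ is automatic, and the Carlson--Toledo conjecture is confirmed---in its strongest, finite-index-free form---for the entire class of surfaces treated here.
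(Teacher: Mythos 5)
There is a fundamental mismatch between what you prove and what the statement asserts. The statement is the Carlson--Toledo conjecture, which the paper records as a well-known \emph{open problem} (with a pointer to \cite{Kol} for background) and never proves; it is used purely as motivation for the question of Toledo that follows it. The conjecture quantifies over \emph{all} infinite K\"ahler groups, i.e., fundamental groups of compact K\"ahler manifolds of arbitrary dimension, and an infinite K\"ahler group need not be the fundamental group of any K\"ahler \emph{surface}, let alone one with holomorphically convex universal cover and finite homotopy rank-sum. Your proposal, as you yourself concede in its first sentence, only verifies the conclusion for the special class treated in Theorem~\ref{Thm B}. That is not a proof of the statement, and no argument built on the paper's surface dichotomy can be one: the general case remains open, so the correct assessment is that the proposal establishes a (strictly weaker) special case, and there is no proof in the paper to compare it against.

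Even within your special case there is a concrete gap in the general-type branch. You invoke the classification of Theorem~\ref{Thm C} to assert that a $\kappa(X)=2$ surface with $\wt{X}\simeq S^2$ is a fake quadric with $c_2=4$, $b_1=0$, hence $b_2(X)=2$; but part (2a) of that theorem reaches the fake-quadric conclusion only under the additional unproven hypotheses $H^4(G;\,\Q)=H^5(G;\,\Q)=0$, which you never verify (and Theorem~\ref{Thm C} is stated for projective surfaces, while the dichotomy of Theorem~\ref{Thm B} is K\"ahler). Fortunately you do not need the classification, only the inequality $b_2(X)\geq 2$, and this can be extracted unconditionally from the Claim inside the paper's proof of Theorem~\ref{Thm: Characterization of compact surfaces having infinite fundamental group and sphere as universal cover}: if $b_2(X)=1$, Hodge theory gives $p_g(X)=0$ and $h^{1,1}(X)=1$; minimality (Lemma~\ref{Lem: Minimality of sphere uniformized compact complex surfaces}), $c_1^2(X)>0$ and the Bogomolov--Miyaoka--Yau inequality force $e(X)>0$, hence $b_1(X)=0$, $e(X)=3$, $\chi(\OO_X)=1$, and Noether's formula yields $c_1^2(X)=3c_2(X)=9$; then Yau's theorem \cite{Yau} makes $\wt{X}$ contractible, contradicting $\wt{X}\simeq S^2$. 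With that substitution your remaining steps are sound: the aspherical case via the K\"ahler class, the Cartan--Leray spectral sequence bound $b_2(G)\geq b_2(X)-1$ (rows $q=0,2$ only, $E_\infty^{2,0}=H^2(G;\,\Q)$ untouched by differentials, $\dim E_2^{0,2}\leq 1$), and the numerical checks $b_2=2$ for non-rational ruled surfaces and $b_2=2b_1-2\geq 2$ in the elliptic case (where $b_2\geq h^{1,1}\geq 1$ and the evenness of $b_1$ give $b_1\geq 2$) are all correct. So the repaired argument is a valid and even pleasant verification of Carlson--Toledo, non-virtually, for the surfaces of Theorem~\ref{Thm B} --- but it is a corollary of the paper's results, not a proof of the conjecture as stated.
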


Based on this conjecture, Domingo Toledo asked the following very interesting question (see, \cite[Question 1.6.]{BMP}).

\begin{question}
Let $G$ be a K{\"a}hler group such that $H^2(G,\,\Z G)\,\neq\,0$. Is $G$ the fundamental group of a compact Riemann surface?
\end{question}
In connection with the above question, we prove the following result for which we invoke the notion of \emph{homotopical height} of a group
for a subclass $\mathcal{C}$ of the class of manifolds, denoted by ${\rm ht}_{\mathcal{C}}(G)$ (the formal definition is given in
a later section). 

\subsection{Notation}

We reserve the following notation:

`$\mathcal{S}$' denotes the class of smooth complex projective manifolds whose universal cover is a Stein manifold.

\begin{mainthm}[Theorem \ref{Thm: Homotopical height of surfaces uniformized by 2-sphere}]
Let $X$ be a smooth projective surface of general type such that its universal cover is a Stein manifold
which is homotopic to $S^2$. Let $G\,:=\,\pi_1(X)$. Then the following statements hold:
\begin{enumerate}
\item If ${\rm ht}_{\mathcal{S}}(G)\,>\,2$, then $H^2(G,\,\Z G)\,=\,0$.
\item If ${\rm ht}_{\mathcal{S}}(G)\,=\,2$ with $M$ being a smooth complex projective surface realizing
${\rm ht}_{\mathcal{S}}(G)$, and the action of $\pi_1(M)$ on $\pi_2(M)$ is the trivial one, then $H^2(G,\,\Z G)\,=\,0$.
\end{enumerate}
\end{mainthm}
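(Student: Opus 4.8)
The plan is to evaluate $H^2(G,\,\Z G)$ not on $X$ itself but on a manifold $M$ that realizes the homotopical height ${\rm ht}_{\mathcal S}(G)$, exploiting that membership in $\mathcal S$ forces the universal cover $\wt M$ to be a Stein manifold, and hence to have the homotopy type of a CW complex of real dimension at most its complex dimension. The guiding idea is that a sufficiently highly connected Stein cover is a wedge of spheres sitting in a degree too high to interfere with degree-$2$ cohomology, so that $H^2(G,\,\Z G)$ can simply be read off from the (vanishing) homology of $\wt M$.

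First I would record the comparison between group cohomology and the topology of the cover. If $M$ is a closed oriented manifold of real dimension $2d$ with $\pi_1(M)\,=\,G$ and universal cover $\wt M$, then $C_\ast(\wt M)$ is a bounded complex of finitely generated free $\Z G$-modules and there is a canonical identification $\Hom_{\Z G}(C_\ast(\wt M),\,\Z G)\,\cong\, C^\ast_c(\wt M)$ of the dual complex with the compactly supported cochains; hence $H^\ast(M;\,\Z G)\,\cong\, H^\ast_c(\wt M)$. The universal-coefficient (hyper-$\Ext$) spectral sequence
\[
E_2^{p,q}\,=\,\Ext^p_{\Z G}\bigl(H_q(\wt M),\,\Z G\bigr)\,\Longrightarrow\, H^{p+q}_c(\wt M),
\]
whose bottom row $E_2^{p,0}\,=\,\Ext^p_{\Z G}(\Z,\,\Z G)\,=\,H^p(G,\,\Z G)$ is exactly group cohomology, together with Poincaré duality $H^k_c(\wt M)\,\cong\, H_{2d-k}(\wt M)$ for the open oriented $2d$-manifold $\wt M$, will let me recover $H^2(G,\,\Z G)$ from the homology of $\wt M$ once the connectivity of $\wt M$ is large enough.

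For part (1) I would take $M\,\in\,\mathcal S$ of complex dimension $d\,\geq\,3$ realizing ${\rm ht}_{\mathcal S}(G)\,>\,2$. Being Stein of complex dimension $d$, the cover $\wt M$ has CW-dimension at most $d$, and the height hypothesis makes it $(d-1)$-connected; a $(d-1)$-connected CW complex of dimension at most $d$ is a wedge of $d$-spheres, so $H_q(\wt M)\,=\,0$ for $0<q<d$. Then only the rows $q=0$ and $q=d$ of the spectral sequence are non-zero, and in total degree $2$ the sole surviving term is $E_2^{2,0}\,=\,H^2(G,\,\Z G)$: the term $E_2^{2-d,\,d}$ vanishes since $2-d<0$ for $d\,\geq\,3$, and no differential can reach $E^{2,0}$ (an incoming differential from row $d$ would originate at $E^{1-d,\,d}$, which has negative horizontal degree). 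Thus $H^2(G,\,\Z G)\,\cong\, H^2_c(\wt M)\,\cong\, H_{2d-2}(\wt M)$, and since $2d-2\,>\,d$ for $d\,\geq\,3$ the wedge of $d$-spheres carries no homology in degree $2d-2$. Hence $H^2(G,\,\Z G)\,=\,0$.

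For part (2) the realizer $M$ is a projective surface, $d\,=\,2$, so $\wt M$ is a simply connected Stein surface of CW-dimension at most $2$; thus $\wt M\,\simeq\,\bigvee S^2$ and $\pi_2(M)\,=\,H_2(\wt M)$. Here degree $2$ is the borderline dimension: the abutment $H^2_c(\wt M)$ receives contributions from both $E_2^{2,0}\,=\,H^2(G,\,\Z G)$ and $E_2^{0,2}\,=\,\Hom_{\Z G}(\pi_2(M),\,\Z G)$. The triviality of the $\pi_1(M)$-action on $\pi_2(M)$ enters precisely here: $\pi_2(M)$ is then a trivial $\Z G$-module, and since $G$ is infinite any $\Z G$-linear map from a trivial module into $\Z G$ has image in $(\Z G)^G\,=\,0$, so $E_2^{0,2}\,=\,0$. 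The hard part will be what remains, and it is genuinely the main obstacle: Poincaré duality identifies the abutment $H^2_c(\wt M)\,\cong\, H_2(\wt M)\,=\,\pi_2(M)$, which is a priori non-zero, so—unlike in part (1)—a purely homological degeneration of the spectral sequence does not by itself force $H^2(G,\,\Z G)$ to vanish. To close the argument I would have to use the maximality encoded in ${\rm ht}_{\mathcal S}(G)\,=\,2$, namely that no member of $\mathcal S$ with fundamental group $G$ has a more highly connected universal cover, in order to rule out a non-trivial class in $E_\infty^{2,0}$ surviving from $\pi_2(M)$; and it is this maximality input, rather than the spectral-sequence bookkeeping, that I expect to be the delicate heart of the proof.
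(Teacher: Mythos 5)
Your approach --- evaluating $H^2(G,\Z G)$ as $H^2_c(\wt M)$ on the cover of a realizing manifold via the universal--coefficient spectral sequence and Poincar\'e duality --- is genuinely different from the paper's, which performs no such computation: the paper invokes \cite[Propositions 6.2 and 6.4]{BMP} (which package the Farrell--Bowditch theory asserting that a finitely presented $G$ with $H^2(G,\Z G)\,\neq\,0$ is, under the relevant height hypotheses, virtually a surface group) and then excludes virtual surface groups geometrically, using Proposition \ref{Prop: No compact surface of general type having fundamental group of a compact curve and sphere as universal cover}(2) together with the Grauert--Remmert Riemann existence theorem. Your proposal has a genuine gap in each part, and in both cases the missing ingredient is exactly this surface-group exclusion. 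In part (1) you take a realizer $M$ ``of complex dimension $d\,\geq\,3$'' and declare $\wt M$ to be $(d-1)$-connected; neither is justified. The definition of ${\rm ht}_{\mathcal{S}}$ places no constraint on $\dim_{\C} M$, and the height hypothesis gives $(r-1)$-connectedness for the height parameter $r$, which is unrelated to $d$. The connectivity slip is repairable: for $d\,\geq\,3$ your conclusion needs only $H_1(\wt M)\,=\,H_2(\wt M)\,=\,0$ and the Stein bound that $\wt M$ has the homotopy type of a CW complex of dimension at most $d\,<\,2d-2$ (the wedge-of-spheres claim is superfluous); and $d\,=\,2$ is also fine, since then $\wt M$ is contractible and $G$ is a ${\rm PD}(4)$-group. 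But $d\,=\,1$ is fatal: a curve realizes ${\rm ht}_{\mathcal{S}}(G)\,=\,\infty\,>\,2$ precisely when $G$ is a surface group, and then your own duality gives $H^2(G,\Z G)\,\cong\,\Z\,\neq\,0$. The bare implication ``${\rm ht}_{\mathcal{S}}(G)\,>\,2\,\Rightarrow\,H^2(G,\Z G)\,=\,0$'' is therefore false as a statement about abstract groups; your argument never uses the hypothesis that $G\,=\,\pi_1(X)$ with $X$ of general type and $\wt X\,\simeq\,S^2$, and it is exactly that hypothesis, through Proposition \ref{Prop: No compact surface of general type having fundamental group of a compact curve and sphere as universal cover}(2), which rules out surface groups and hence the case $d\,=\,1$.

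In part (2) the difficulty you flag at the end is not a delicate finishing step; it is a wall this method cannot pass. Your own bookkeeping shows that no differential touches $E^{2,0}$ and that $E_\infty^{1,1}\,=\,E_\infty^{0,2}\,=\,0$, whence $H^2(G,\Z G)\,=\,E_\infty^{2,0}\,\cong\,H^2_c(\wt M)\,\cong\,H_2(\wt M)\,\cong\,\pi_2(M)$. But ${\rm ht}_{\mathcal{S}}(G)\,=\,2$ means precisely that $\pi_2(M)\,\neq\,0$ for \emph{every} $M\,\in\,\mathcal{S}$ with $\pi_1(M)\,=\,G$, so the maximality you hope to exploit works against you: carried to completion, your computation proves $H^2(G,\Z G)\,\neq\,0$, the negation of the statement. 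The same computation applied to $X$ itself (where the $G$-action on $\pi_2(X)\,\cong\,\Z$ may be non-trivial, but $\Hom_{\Z G}(\pi_2(X),\Z G)$ still vanishes, since an element of $\Z G$ whose coefficients all have equal absolute value and which has finite support must be zero) yields $H^2(G,\Z G)\,\cong\,\Z$ outright; what this actually reveals is that the theorem's hypotheses are unsatisfiable --- by Bowditch's theorem $H^2(G,\Z G)\,\cong\,\Z$ forces $G$ to be virtually a surface group, which the paper's Riemann-existence argument plus Proposition \ref{Prop: No compact surface of general type having fundamental group of a compact curve and sphere as universal cover}(2) shows is incompatible with $X$ being of general type --- so the statement holds vacuously. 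The implication ``$H^2(G,\Z G)\,\neq\,0$ implies $G$ is virtually a surface group'' is group-theoretic content that spectral-sequence bookkeeping on $\wt M$ cannot reproduce, and it is exactly what the paper imports from \cite{BMP}; without it, your outline for part (2) cannot be completed.
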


\section{Proof of the main results}

\begin{theorem}\label{Thm: Characterization of quasi-elliptic compact surfaces with finite fundamental group}
Let $X$ be a smooth compact complex surface having a finite fundamental group satisfying the finite homotopy rank-sum
property. Then $X$ is in fact simply connected and of the rationally elliptic homotopy type.

Consequently, $X$ is one of the surfaces listed below:
\begin{enumerate}
\item[(a)] the complex projective plane $\C\PP^2$,

\item[(b)] Hirzebruch surfaces $\mathbb{S}_h\,:=\,\PP
(\mathcal{O}_{\C\PP^1}\oplus \mathcal{O}_{\C\PP^1}(h))$, where $h\,\geq\,0$ is an integer, and

\item[(c)] simply connected fake quadrics, if they exist.
\end{enumerate}
\end{theorem}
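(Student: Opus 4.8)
The plan is to pass to the universal covering and reduce everything to the simply connected classification already recorded in \cite{Amo-Bis}. Set $G\,=\,\pi_1(X)$ and let $p\,:\,\wt X\,\longrightarrow\, X$ be the universal cover. Since $G$ is finite, $p$ is a finite holomorphic covering of degree $|G|$, so $\wt X$ is again a smooth compact complex surface, now simply connected. Because $p$ induces isomorphisms $\pi_i(\wt X)\,\cong\,\pi_i(X)$ for every $i\,\geq\, 2$, the surface $\wt X$ inherits the finite homotopy rank-sum property of \eqref{Defn: Finite homotopy rank-sum} from $X$, and being compact it satisfies $\dim H^\ast(\wt X;\,\Q)\,<\,\infty$. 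Hence $\wt X$ is a simply connected space of rationally elliptic homotopy type.

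First I would promote $\wt X$ to a K\"ahler surface: simple connectedness gives $H_1(\wt X;\,\Z)\,=\,0$, so $b_1(\wt X)\,=\,0$ is even, and by the classical criterion a compact complex surface with even first Betti number is K\"ahler. Thus $\wt X$ is a simply connected compact K\"ahler surface of rationally elliptic homotopy type, and \cite[Theorem 1.1]{Amo-Bis} identifies it as one of $\C\PP^2$, a Hirzebruch surface $\mathbb{S}_h$, or a simply connected fake quadric.

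The decisive step is to show that $G$ is trivial, and the key point is that every surface on this list satisfies $\chi(\mathcal{O})\,=\,1$: the surfaces $\C\PP^2$ and $\mathbb{S}_h$ are rational, and a fake quadric has $q\,=\,p_g\,=\,0$ by definition. Because $p$ is unramified we have $K_{\wt X}\,=\,p^\ast K_X$, so $K_{\wt X}^2\,=\,|G|\,K_X^2$, while $e(\wt X)\,=\,|G|\,e(X)$ for the topological Euler characteristics; Noether's formula $\chi(\mathcal{O})\,=\,\tfrac1{12}(K^2+e)$ then yields $\chi(\mathcal{O}_{\wt X})\,=\,|G|\cdot\chi(\mathcal{O}_X)$. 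Since $\chi(\mathcal{O}_{\wt X})\,=\,1$ and $\chi(\mathcal{O}_X)\,\in\,\Z$, this forces $|G|\,=\,1$. Hence $X\,=\,\wt X$ is simply connected, of rationally elliptic homotopy type, and equal to one of the three surfaces listed.

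The heavy lifting is done by the simply connected classification \cite{Amo-Bis}, so the genuine work here is the descent, and the main obstacle is to verify that finiteness of $G$ is really consumed by it: this happens through the multiplicativity of $\chi(\mathcal{O})$ under unramified covers combined with the coincidence $\chi(\mathcal{O})\,=\,1$ for every output of \cite{Amo-Bis}. One should confirm that \cite[Theorem 1.1]{Amo-Bis} determines $\wt X$ finely enough to read off $\chi(\mathcal{O}_{\wt X})\,=\,1$ (it does, as each type has $q\,=\,p_g\,=\,0$) and that Noether's formula is invoked in the generality of compact complex surfaces. If one prefers to avoid this bookkeeping, a Lefschetz fixed point argument gives an alternative route to ruling out nontrivial free actions: every self-map of $\C\PP^2$ has Lefschetz number $1+d+d^2\,\neq\,0$, and a nontrivial deck transformation of a Hirzebruch surface would have to act as $-\mathrm{Id}$ on $H^2$, which is impossible since the fibre class is effective.
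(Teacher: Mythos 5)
Your proof is correct, and while it shares the paper's skeleton (pass to the universal cover $\wt{X}$, note it is compact and rationally elliptic, establish that it is K\"ahler, and invoke the classification of \cite{Amo-Bis}), the decisive descent step is genuinely different. The paper works with the topological Euler characteristic: it observes $e(\wt{X})\,\in\,\{3,4\}$ for the three output types, notes that K\"ahlerness descends along the finite cover so that $X$ itself is K\"ahler, deduces $e(X)\,=\,2\,+\,b_2(X)\,\geq\,3$ from $b_2(X)\,\geq\,h^{1,1}(X)\,\geq\,1$, and concludes $\deg p\,=\,1$ from $e(\wt{X})\,=\,\deg p\cdot e(X)$. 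You instead use the holomorphic Euler characteristic: all three output types have $q\,=\,p_g\,=\,0$, hence $\chi(\mathcal{O}_{\wt{X}})\,=\,1$, and the multiplicativity $\chi(\mathcal{O}_{\wt{X}})\,=\,|G|\cdot\chi(\mathcal{O}_X)$ (obtained from Noether's formula, valid for arbitrary compact complex surfaces, together with $K_{\wt{X}}\,=\,p^\ast K_X$ and $e(\wt{X})\,=\,|G|\,e(X)$) forces $|G|$ to divide $1$. Your route is arguably tighter: it requires no inequality and no K\"ahler structure on the quotient $X$, only the integrality of $\chi(\mathcal{O}_X)$, whereas the paper must first transfer K\"ahlerness to $X$ to get the bound $e(X)\,\geq\,3$; both arguments, however, exploit the same phenomenon, namely a multiplicative invariant of $\wt{X}$ too small to admit a nontrivial divisor. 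One caveat: your closing Lefschetz fixed-point alternative is only a sketch and, as stated, handles $\C\PP^2$ and the Hirzebruch surfaces but not fake quadrics (there one would argue that a free finite-order holomorphic action forces trace $-2$, hence $-\mathrm{Id}$, on $H^2$, contradicting preservation of the K\"ahler cone); since it is offered merely as an alternative, this does not affect the correctness of your main argument.
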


\begin{proof} Let $p\,: \,\wt{X}\,\longrightarrow \, X$ be the universal covering. Since $\pi_1(X)$ is finite, $\wt{X}$ still remains a compact complex surface. Therefore $H^*(\wt{X};\,\Q)$ is finite dimensional. Additionally, since $X$ enjoys the finite homotopy rank-sum property, it follows that $\wt{X}$ is a rationally elliptic $1$-connected compact complex (K{\"a}hler) surface. Then the complete 
classification of $\wt{X}$ is as mentioned in \cite[Theorem 1.1(a)--(c)]{Amo-Bis}. Clearly, $e(\PP^2)\,=\,3$. Again, 
Hirzebruch surfaces are topologically locally trivial $\PP^1$-fiber bundle over $\PP^1$. Therefore $e(\mathbb{S}_h)\,=\,e(\PP^1)\,\cdot\, 
e(\PP^1)\,=\,4$ for every $h\,\geq\,0$. Next, the Euler characteristic of every fake quadric coincides 
with that of the quadric surface $\PP^1\times \PP^1$, which is $4$.

Now, as $X$ is a compact complex surface, $e(X)\,=\,2\,-\,2b_1(X)\,+\,b_2(X)$. This implies that
\begin{equation}\label{f1}
e(X)\,=\,2\,+\,b_2(X),
\end{equation}
because $\pi_1(X)$ is finite. Since $\wt{X}$ is K{\"a}hler, and $p$ is a finite covering, it implies that
$X$ is also K{\"a}hler. Consequently, $b_2(X)\,\geq\, 
h^{1,1}(X)\,\geq\,1$. Therefore, it follows from \eqref{f1} that $e(X)\,\geq\, 3$.

We observed above that $e(\wt{X})$ is either $3$ or $4$. Since $e(\wt{X})\,=\,\deg p\,\cdot\, e(X)$, it turns out that $\deg p\,=\,1$,
whence $\pi_1(X)$ is trivial. The proof is now completed using the classification in \cite[Theorem 1.1(a)--(c)]{Amo-Bis}.
\end{proof}

Although, the definition of finite homotopy rank-sum property involves the information about the ranks of all the higher homotopy groups of a 
topological space, we will see that for compact complex surfaces (having infinite fundamental groups) this property is determined by 
the second homotopy groups only.

\begin{remark}\label{Rem: pi_2 of a 4-manifold is torsion-free}
It is important to note that $\pi_2$ of a smooth complex projective surface is always torsion-free. In \cite{Gur2}, R. V. Gurjar proved this for such surfaces if the universal cover happens to be holomorphically convex. In fact, $\pi_2$ becomes free for these surfaces. Soon after, in 2004, Jerome P. Levine and Daniel Ruberman observed the following more general result after some correspondence with R. V. Gurjar: ``\emph{The second homotopy group of a compact, connected $4$-manifold is torsion-free}''. But it seems to us that Levine-Ruberman never published this result\footnote{We refer to the answer by Ruberman to the MathOverflow question under the following link:\\
\url{https://mathoverflow.net/questions/137757}. See also \cite[Proposition 3.1]{GP}.}.
\end{remark}

Now we prove the following characterization result.

\begin{theorem}\label{Thm: Characterization of quasi-elliptic compact surfaces with infinite fundamental group1}
Let $X$ be a compact complex K{\"a}hler surface having an infinite fundamental group satisfying the finite homotopy rank-sum property. Then one of the following holds:
\begin{enumerate}
\item $X$ is an Eilenberg-MacLane $K(\pi,\,1)$-space with $\pi_1(X)$ a Poincar{\'e} duality $\rm{PD(4)}$-group;

\item $\rank \pi_2(X)\,=\,1$.
\end{enumerate}

Moreover, if the universal cover $\wt{X}$ of $X$ is holomorphically convex, then $\pi_2(X)$ has rank
$1$ if and only if $\wt{X}$ is homotopic to $S^2$.
\end{theorem}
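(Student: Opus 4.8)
The plan is to pass to the universal cover and analyze it through rational homotopy theory, reserving the K\"ahler and holomorphic-convexity hypotheses for two decisive steps. First I would set $p\colon\wt{X}\to X$ to be the universal cover and $G=\pi_1(X)$. Since $\pi_i(\wt{X})\cong\pi_i(X)$ for $i\ge 2$, the space $\wt{X}$ is $1$-connected and inherits the finite homotopy rank-sum property; as $G$ is infinite, $\wt{X}$ is a non-compact (open) smooth $4$-manifold. I would then invoke two structural facts: an open connected $n$-manifold has the homotopy type of a CW complex of dimension $\le n-1$, so $H^i(\wt{X};\Q)=0$ for $i\ge 4$; and finiteness of $\sum_{i\ge 2}\dim\pi_i(\wt{X})\otimes\Q$ forces the minimal Sullivan model of $\wt{X}$ to have finitely many generators, whence each rational cohomology group is finite-dimensional. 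Together these show that $\wt{X}$ is a $1$-connected \emph{rationally elliptic} space of formal dimension $\le 3$.

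Next I would apply Halperin's theorem that a $1$-connected rationally elliptic space is a rational Poincar\'e duality space. With formal dimension $n\le 3$ and $H^1(\wt{X};\Q)=0$, duality forces the rational cohomology algebra to be that of a point ($n=0$), of $S^2$ ($n=2$), or of $S^3$ ($n=3$); in particular $\rank\pi_2(X)=\dim H^2(\wt{X};\Q)\le 1$. In the point case $\wt{X}$ is rationally acyclic, hence (being $1$-connected with $\pi_2\cong H_2$ torsion-free by Remark \ref{Rem: pi_2 of a 4-manifold is torsion-free}, and $H_3$ free of rank $0$) integrally contractible, so $X$ is a $K(G,1)$ and, being a closed aspherical $4$-manifold, $G$ is a $\mathrm{PD}(4)$-group; this is case (1). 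The $S^2$ case gives $\rank\pi_2(X)=1$, which is case (2).

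The main obstacle is excluding the $S^3$ case, and here the K\"ahler hypothesis enters. If $\wt{X}$ were rationally $S^3$ then $H_3(\wt{X};\Q)=\Q$. Poincar\'e--Lefschetz duality for the open oriented $4$-manifold gives $H_3(\wt{X};\Q)\cong H^1_c(\wt{X};\Q)$, and the end exact sequence, together with $H^1(\wt{X};\Q)=0$, identifies this with $\Q^{\,\varepsilon-1}$, where $\varepsilon$ is the number of ends of $\wt{X}$. Thus $\varepsilon=2$, so $G$ has two ends and is therefore virtually $\Z$; but a finite-index subgroup of a K\"ahler group is again a K\"ahler group and hence has even first Betti number, while $\Z$ has $b_1=1$ -- a contradiction. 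This rules out $S^3$ and completes the dichotomy of parts (1)--(2).

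For the final assertion, suppose $\wt{X}$ is holomorphically convex. The direction $\wt{X}\simeq S^2\Rightarrow\rank\pi_2(X)=1$ is immediate from $\pi_2(S^2)=\Z$. For the converse I would upgrade the rational statement to an integral homotopy equivalence: from $\rank\pi_2(X)=1$ the analysis above shows $\wt{X}$ is rationally $S^2$, torsion-freeness of $\pi_2$ yields $H_2(\wt{X};\Z)\cong\Z$, while $H_3(\wt{X};\Z)$ is free of rank $0$ and $H_4(\wt{X};\Z)=0$, so $H_*(\wt{X};\Z)\cong H_*(S^2;\Z)$; a generator of $\pi_2$ then furnishes a map $S^2\to\wt{X}$ inducing a homology isomorphism, which is a homotopy equivalence by Whitehead's theorem. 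Holomorphic convexity is what makes this rigorous and geometric: the Remmert reduction $\wt{X}\to Y$ onto a $1$-connected Stein space, combined with the Andreotti--Frankel bound on the homotopy dimension of Stein spaces, pins down the integral homotopy type and shows that $\wt{X}$ carries no positive-dimensional compact analytic subset, so the reduction is an isomorphism and $\wt{X}$ is itself Stein. I expect the delicate points to be verifying that the Remmert base is contractible and that the single surviving $2$-dimensional class is realized in such a way as to yield $\wt{X}\simeq S^2$ rather than a space carrying extra torsion.
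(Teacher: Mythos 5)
Your treatment of the main dichotomy (cases (1) and (2)) is correct, and it takes a genuinely different route from the paper's: where the paper derives $\rank\pi_2(X)\,\leq\,1$ from the Friedlander--Halperin inequality \cite[Corollary 1.3(2)]{Fri-Hal} and then, in the rank-zero case, invokes Gromov's theorem \cite{Gro} that infinite K\"ahler groups are one-ended together with \cite[Lemma 2.1]{BMP}, you obtain the trichotomy (point, $S^2$, $S^3$) from Halperin's Poincar\'e duality theorem for elliptic spaces and exclude $S^3$ by hand: two ends $\Rightarrow$ virtually $\Z$ $\Rightarrow$ a finite-index K\"ahler subgroup with odd first Betti number, which is impossible. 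This replacement of Gromov's theorem by the classical two-ends argument is legitimate (infinitely many ends are already excluded because $H_3(\wt{X};\,\Q)$ is finite-dimensional), and your route is arguably more self-contained. Two debts you should still pay explicitly: the assertion that finite total rank of $\pi_*$ forces finite-dimensional rational cohomology (the paper gets finiteness of $H_2$ and $H_3$ directly from Hurewicz, which is more elementary), and the repeated claim that $H_3(\wt{X};\,\Z)$ is torsion-free, which is true but requires the argument $H_3(\wt{X};\,\Z)\,\cong\,H^1_c(\wt{X};\,\Z)\,=\,\varinjlim_K H^1(\wt{X},\,\wt{X}\setminus K;\,\Z)$, a direct limit of torsion-free groups.

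The ``moreover'' part, however, has a genuine gap. You write that ``torsion-freeness of $\pi_2$ yields $H_2(\wt{X};\,\Z)\,\cong\,\Z$,'' but a torsion-free abelian group of rank $1$ need not be $\Z$: it can be any subgroup of $\Q$, e.g.\ $\Q$ itself or $\Z[1/2]$, in which case $\wt{X}$ would be a Moore space $M(A,\,2)$ for such an $A$, not homotopy equivalent to $S^2$. Torsion-freeness of $\pi_2$ (Remark \ref{Rem: pi_2 of a 4-manifold is torsion-free}) holds for \emph{every} compact $4$-manifold, so it cannot by itself yield the conclusion; what is needed is that $\pi_2(X)$ is a \emph{free} abelian group, and that is precisely what holomorphic convexity buys, via Gurjar's theorem quoted in that same remark (the paper also uses \cite[Proposition 3.7]{GP} to get $H_3(\wt{X};\,\Z)\,=\,0$ outright). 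Your actual deployment of holomorphic convexity --- the closing paragraph about Remmert reductions, Andreotti--Frankel, and the claim that $\wt{X}$ contains no positive-dimensional compact analytic subset and is therefore Stein --- is both unnecessary for the statement at hand and unproven: that claim is exactly the content, and the hard part, of Theorem \ref{Thm: Characterization of quasi-elliptic compact surfaces with infinite fundamental group2}, where it consumes a page of argument involving Grauert's contraction criterion and translates of curves. Once you replace ``torsion-free'' by ``free'' via Gurjar's result, $\pi_2(X)\,\cong\,\Z$, and your Moore-space/Whitehead argument (which is the paper's own) closes the proof.
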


\begin{proof}
Let $p\,:\, \wt{X}\,\longrightarrow \,X$ be the universal covering. Since $X$ satisfies finite homotopy rank-sum property, it implies that $\pi_i(X)\,=\,\pi_i(\wt{X})$ is of finite rank for all $i\,\geq\,2$. Hence by Hurewicz's theorem,
$H_2(\wt{X};\,\Q)$ is a finite-dimensional $\Q$-vector space. Again Hurewicz's theorem implies that the Hurewicz homomorphism
$$h_3\,:\,\pi_3(\wt{X})\longrightarrow\,H_3(\wt{X};\,\Z)$$ is surjective and therefore $h_3\,\otimes\,{\rm Id}
\,:\, \pi_3(\wt{X})\,\otimes_{\Z}\,\Q\longrightarrow\,H_3(\wt{X};\,\Q)$ is also surjective. Thus
$H_3(\wt{X};\,\Q)$ is also a
finite-dimensional $\Q$-vector space, and $\dim H_3(\wt{X};\,\Q)\,\leq\,\dim \pi_3(\wt{X})\,\otimes_{\Z}\,\Q$. Consequently, $H^i(X;\,\Q)$ is finite-dimensional using the universal coefficient theorem for $i\,=2,\,3$. Since $\pi_1(X)$ is infinite, $\wt{X}$ is a non-compact $4$-manifold and therefore $H^4(\wt{X};\,\Z)\,=\,0$. Thus it follows that the graded $\Q$-vector space $H^\ast(\wt{X};\,\Q)$ is finite-dimensional. Hence by definition, $\wt{X}$ is of rationally elliptic homotopy type. Therefore, 
using Friedlander--Halperin's result, \cite[Corollary 1.3(2)]{Fri-Hal}, it is deduced that
\begin{equation}\label{e1}
\sum_{k\,\geq \,1}2k\dim \pi_{2k}(\wt{X})\,\otimes_\Z\,\Q\,\,\leq\,\, 3,
\end{equation}
which yields that $\rank \pi_2(X)\,\leq\,1$. 

Recall that, $\pi_2(X)\,=\,\pi_2(\wt{X})$ is torsion-free (cf. Remark \ref{Rem: pi_2 of a 4-manifold is torsion-free}). Therefore if $\rank \pi_2(X)\,=\,0$, it follows that $\pi_2(X)$ is trivial. In 
\cite{Gro}, Gromov proved that every infinite K{\"a}hler group is a one-ended group. Hence from \cite[Lemma 2.1]{BMP} it follows that 
$X$ is an Eilenberg-MacLane $K(\pi,\,1)$-space with $\pi_1(X)$ a Poincar{\'e} duality $\rm{PD(4)}$-group. This completes the proof
of the first part.

For the last part, we assume that $\wt{X}$ is a holomorphically convex manifold. Then using \cite[Proposition 3.7]{GP}
we have $H_3(\wt{X};\,\Z)\,=\,0$. Since $\wt{X}$ is holomorphically convex, $\pi_2(X)$ is a free abelian group (cf. Remark \ref{Rem: pi_2 of a 4-manifold is torsion-free}). If $\rank \pi_2(X)\,=\,1$, it turns out that $\pi_2(X)\,=\,\pi_2(\wt{X})\,\cong\,\Z$. Therefore using Hurewicz's theorem, it follows that $H_2(\wt{X};\,\Z)\,\cong\,\Z$. Also note that, the $i$-th reduced homology group $\widetilde{H}_i(\wt{X};\,\Z)$ is trivial for all $i\,\neq\,2$. Consequently, $\wt{X}$ is a Moore $M(\Z,\,2)$-space, which implies that $\wt{X}$ is homotopically equivalent to the real $2$-sphere.
This completes the proof.
\end{proof}

A famous conjecture by Igor R. Shafarevich is as follows:

\begin{conj}[Shafarevich's Conjecture]\label{Conj: SC}
The universal cover of a smooth complex compact K{\"a}hler variety is holomorphically convex.
\end{conj}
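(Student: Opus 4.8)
The statement is a famous open problem, so what I can offer is a research programme rather than a complete argument; I will sketch the approach that has succeeded in the cases currently understood and indicate where it breaks down. The plan is to reformulate holomorphic convexity of $\wt{X}$ as the existence of a proper holomorphic surjection (the \emph{Shafarevich morphism}) from $\wt{X}$ onto a Stein space, whose fibres are precisely the connected components of preimages of those compact subvarieties of $X$ on which $\pi_1$ acts trivially. Since a complex space is holomorphically convex exactly when it admits a proper holomorphic map onto a Stein space (its Remmert reduction), producing such a morphism --- equivalently, a plurisubharmonic exhaustion of $\wt{X}$ that is locally constant along the ``$\pi_1$-negligible'' directions --- would establish the conjecture. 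Following the framework of Koll\'ar and Campana, I would first construct the Shafarevich map on $X$ itself as a morphism of normal varieties, and then try to lift its defining exhaustion to $\wt{X}$.

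First I would attack the linear shadow of the fundamental group. For each representation $\rho\,:\,\pi_1(X)\,\longrightarrow\, GL_n(\C)$ I would build a $\rho$-equivariant harmonic map from $\wt{X}$ to the relevant symmetric space (for the archimedean, reductive part) and to the Bruhat--Tits building of $GL_n$ over each non-archimedean completion (to capture the discrete part), and then apply the Siu--Sampson Bochner formulas to force these maps to factor holomorphically. The resulting holomorphic fibrations isolate the compact directions killed by $\rho$; organising all such representations through non-abelian Hodge theory and the associated variations of Hodge structure, and passing to a common refinement, yields a proper map to a Stein space that detects everything visible to finite-dimensional representations. This is, in outline, the Eyssidieux--Katzarkov--Pantev--Ramachandran proof of the conjecture for K\"ahler groups admitting a faithful reductive linear representation.

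The hard part will be the non-linear part of $\pi_1(X)$. A general K\"ahler group need not be linear, nor even residually finite, so finite-dimensional representations and finite \'etale covers may fail to separate points of $\wt{X}$; the exhaustion built above then controls only the reductive, linear ``shadow'' of the group and says nothing about the directions invisible to all representations. Closing this gap is precisely where the conjecture remains open: one would need either a substitute for the harmonic-map machinery that sees the non-linear data, or a structural theorem guaranteeing that every K\"ahler group is ``large enough'' in the sense predicted by the Carlson--Toledo circle of ideas recalled above. It is worth noting that this is exactly the hypothesis that the holomorphic-convexity results of the present paper (Theorem \ref{Thm B} and the final part of Theorem \ref{Thm: Characterization of quasi-elliptic compact surfaces with infinite fundamental group1}) must \emph{assume} rather than prove, so any genuine progress here would let one drop that assumption throughout.
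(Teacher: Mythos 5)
The statement you were asked to prove is labeled a \emph{conjecture} in the paper, and the paper gives no proof of it: this is Shafarevich's conjecture, which is open. The paper only records, in Remark \ref{Rem: Shafarevich Conj}, that it is known in special cases (for instance for smooth complex projective surfaces of Kodaira dimension at most $1$), and everywhere else it treats holomorphic convexity of the universal cover as a \emph{hypothesis} (in Theorems \ref{Thm: Characterization of quasi-elliptic compact surfaces with infinite fundamental group1} and \ref{Thm: Characterization of quasi-elliptic compact surfaces with infinite fundamental group2}) rather than a consequence. So there is no proof in the paper to compare yours against, and you were right not to manufacture one; your closing observation that the paper must assume what this conjecture asserts is exactly correct.

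As a research programme, your sketch is an accurate account of the state of the art: the reformulation of holomorphic convexity via the Cartan--Remmert reduction and the Shafarevich morphism in the sense of Koll\'ar and Campana; the Eyssidieux--Katzarkov--Pantev--Ramachandran strategy of $\rho$-equivariant harmonic maps to symmetric spaces and Bruhat--Tits buildings, factored holomorphically via the Siu--Sampson Bochner technique and organised by non-abelian Hodge theory, which settles the case of fundamental groups admitting a faithful reductive linear representation; and the honest identification of the obstruction, namely that K\"ahler groups need not be linear or residually finite, so this machinery controls only the linear shadow of $\pi_1(X)$. One imprecision worth fixing: the fibres of the Shafarevich morphism are the connected components of preimages of those compact subvarieties $Z\,\subset\, X$ for which the image of $\pi_1(Z)\,\longrightarrow\,\pi_1(X)$ is \emph{finite}, not those ``on which $\pi_1$ acts trivially.'' But the essential point stands: what you have written is a survey of partial results together with a correctly located gap, not a proof, and since the conjecture is open no proof exists for either of us to compare it with.
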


\begin{remark}\label{Rem: Shafarevich Conj}
For smooth complex projective surfaces, the above conjecture is known to be true in many cases. For example, 
it is known to be true for all smooth complex projective surfaces of non-general type, i.e., of the Kodaira 
dimension $\leq\,1$. A nice summary of cases when this conjecture holds for smooth projective surfaces can be 
found in \cite{Gur1}, \cite{GuP}. Also, in the context of smooth K{\"a}hler varieties, in certain cases, the 
above conjecture is known to be true. Most often, there are conditions on the type of the fundamental group 
or the existence of some special kind of representation of the fundamental group.
\end{remark}

We prove the following. 

\begin{theorem}\label{Thm: Characterization of quasi-elliptic compact surfaces with infinite fundamental group2}
Let $X$ be a smooth compact complex K{\"a}hler surface having an infinite fundamental group whose universal cover $\wt{X}$ is holomorphically convex. Then $X$ satisfies the finite homotopy rank-sum property if and only if one of the following holds:
\begin{enumerate}
\item $X$ is an Eilenberg-MacLane $K(\pi,\,1)$-space with $\pi_1(X)$ a $\rm{PD(4)}$-group;

\item $\wt{X}$ is homotopic to $S^2$.
\end{enumerate}
Consequently, $\wt{X}$ is a Stein manifold.
\end{theorem}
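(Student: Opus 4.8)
The plan is to read off the biconditional almost directly from the preceding Theorem \ref{Thm: Characterization of quasi-elliptic compact surfaces with infinite fundamental group1}, and then to treat the Steinness assertion as the one genuinely new step. For the forward implication of the equivalence, I would observe that the hypotheses here are exactly those of that theorem together with holomorphic convexity of $\wt X$; its conclusion then yields the dichotomy that either $X$ is a $K(\pi,\,1)$ with $\pi_1(X)$ a $\mathrm{PD}(4)$-group, or $\rank\pi_2(X)=1$, the latter being equivalent under holomorphic convexity to $\wt X\simeq S^2$. These are precisely alternatives (1) and (2). For the reverse implication I would verify that each alternative forces a finite homotopy rank-sum: in case (1) the cover $\wt X$ is contractible, so $\pi_i(\wt X)=0$ for all $i\geq 2$ and the rank-sum is $0$; in case (2) one has $\pi_i(\wt X)=\pi_i(S^2)$, and since $S^2$ is rationally elliptic with $\pi_\ast(S^2;\,\Q)$ concentrated in degrees $2$ and $3$ (Serre), the rank-sum equals $2$. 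Both are finite.

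The substantive part is the final claim that $\wt X$ is Stein. The plan is to use the Remmert reduction $r\colon\wt X\longrightarrow Y$ of the holomorphically convex surface $\wt X$, where $Y$ is Stein and $\wt X$ is Stein if and only if $r$ is biholomorphic, equivalently if and only if $\wt X$ contains no positive-dimensional compact analytic subset. In case (1) we have $H_2(\wt X;\,\Q)=0$, whereas any compact curve $C\subset\wt X$ would satisfy $\int_C\omega>0$ for the pulled-back Kähler form $\omega$ and hence represent a nonzero class in $H_2(\wt X;\,\Q)$; so no such $C$ exists and $\wt X$ is Stein.

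Case (2), where $\wt X\simeq S^2$ and $b_2(\wt X)=1$, is the main obstacle, because being homotopic to $S^2$ together with holomorphic convexity does not by itself give Steinness: the blow-up of $\C^2$ at a point is a holomorphically convex, non-Stein surface homotopy equivalent to $S^2$. The covering structure must therefore be used. I would argue by contradiction: if $\wt X$ is not Stein, then $r$ contracts some connected compact curve $C$, and by Grauert's criterion the intersection matrix of the irreducible components of the exceptional set is negative definite. Since the Remmert reduction is canonical, the deck group $G=\pi_1(X)$ acts on $Y$ making $r$ equivariant; properness of $r$ transfers the proper discontinuity of the $G$-action from $\wt X$ to $Y$, so every stabilizer is finite and the $G$-orbit of the contracted point is infinite. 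Hence the exceptional set of $r$ has infinitely many pairwise disjoint connected components, each negative definite. As distinct components are disjoint, the intersection form on the span of all their irreducible components is block-diagonal with negative-definite blocks, hence negative definite and in particular nondegenerate; therefore these infinitely many classes are linearly independent in $H_2(\wt X;\,\Q)$. This forces $\dim H_2(\wt X;\,\Q)=\infty$, contradicting $b_2(\wt X)=1$. Thus $\wt X$ has no compact curve, $r$ is biholomorphic, and $\wt X$ is Stein.

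I expect the delicate points to be the equivariance and proper discontinuity of the Remmert reduction on $Y$ (needed to produce infinitely many exceptional components from a single one via the infinite orbit) and the linear independence of the exceptional classes in the noncompact setting, which must be justified through negative-definiteness of the intersection form rather than Poincaré duality, the latter being unavailable on the open manifold $\wt X$.
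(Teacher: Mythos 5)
Your handling of the biconditional matches the paper (which simply reads it off from Theorem \ref{Thm: Characterization of quasi-elliptic compact surfaces with infinite fundamental group1}), and your Steinness argument in case (1) --- a compact curve in a contractible cover would pair nontrivially with the pulled-back K\"ahler form, contradicting $H_2(\wt{X};\,\Q)=0$ --- is a correct, self-contained substitute for the paper's citation of \cite[Theorem 6.2]{GGH}. In case (2), restricted to the situation where the Remmert reduction is bimeromorphic, your argument is also sound and runs parallel to the paper's: both exploit disjoint deck translates of a curve $C$ with $C^2<0$ against $b_2(\wt{X})=1$; the paper gets linear independence via first Chern classes and degrees of $\mathcal{O}_{\wt{X}}(C)$ restricted to curves, you get it via negative-definiteness of the intersection form, and both mechanisms work.

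The genuine gap is that you treat only that situation. You claim that if $\wt{X}$ is not Stein then $r$ contracts a compact curve whose components are negative definite ``by Grauert's criterion''. Grauert's criterion applies to exceptional sets of modifications, i.e.\ only when $\dim Y=2$. But the Remmert reduction of a holomorphically convex surface can map onto a curve: then every fiber of $r$ is a contracted compact curve, yet fibers have self-intersection $0$ (the line bundle $r^*\mathcal{O}_Y(y)$ is trivial because $\mathrm{Pic}$ of an open Riemann surface vanishes), so negative-definiteness --- and with it your entire linear-independence argument --- collapses. The paper splits the proof into exactly these two cases and disposes of the fibered case by citing a theorem of Gurjar \cite[\S~2, Theorem 1]{Gur2} asserting that $b_2(\wt{X})$ is infinite there.

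Moreover, this is not a patchable omission: the fibered case is precisely where the stated theorem itself breaks down. Take $X$ a non-rational ruled surface, say $X=\PP^1\times B$ with $g(B)\geq 1$. Then $\pi_1(X)$ is infinite, $\wt{X}$ is a $\PP^1$-bundle over $\D$ or $\C$, hence proper over a Stein base and therefore holomorphically convex, homotopy equivalent to $S^2$, and the homotopy rank-sum of $X$ is $2$; the Remmert reduction is the projection to the base, so $\dim Y=1$. Yet $\wt{X}$ contains the compact curves $\PP^1\times\{pt\}$ and is not Stein. So alternative (2) holds while the concluding Steinness assertion fails --- consistently with the paper's own Theorem \ref{Thm: Characterization of compact surfaces having infinite fundamental group and sphere as universal cover}(1)(a), which places non-rational ruled surfaces under alternative (2). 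Consequently the result of Gurjar invoked in the paper's Case 2 cannot hold in the generality in which it is quoted (it must carry a hypothesis such as positive genus of the fibers, which rulings violate). Your instinct that ``the covering structure must be used'' is right in the modification case, but even the covering structure cannot rescue the fibered case, and any correct version of the theorem has to exclude it.
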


\begin{proof}
The first part is obvious from Theorem \ref{Thm: Characterization of quasi-elliptic compact surfaces with infinite fundamental group1} (see also Remark \ref{Rem: Shafarevich Conj}). Therefore 
we will only prove here the Steinness of $\widetilde X$.

Let $M$ be any smooth complex compact surface that is an Eilenberg-MacLane $K(\pi,\,1)$-space. If $M$ has a holomorphically convex universal cover $\wt{M}$, then $\wt{M}$ is a Stein contractible manifold. We refer the reader to \cite[Theorem 6.2]{GGH} for this. 

So it remains to prove the Steinness of $\wt{X}$ whenever $\wt{X}$ is holomorphically convex and is homotopic to $S^2$. 

If possible, assume that $\wt{X}$ is not a Stein manifold. Then there is a Cartan--Remmert reduction 
$f\,:\,\wt{X}\,\longrightarrow\, Y$, which is a proper complex analytic map to a normal Stein space $Y$ with connected fibers. Since $f$ is 
proper, the fibers of $f$ are compact complex submanifolds of $\wt{X}$. Then we have the following two 
possibilities:
	
{\bf Case 1.}\,\, {\it When $\dim Y\,=\,2$.} 
 
In this case, the general fiber of $\varphi$ is discrete. Since fibers of $f$ are connected, the general fiber is just a singleton, 
i.e., $f$ is a bimeromorphism but it is not an isomorphism because $\wt{X}$ is not a Stein space (by assumption). Therefore there is a 
fiber of $f$ which contains a complete complex curve as one of its irreducible components. If we choose one such irreducible complete curve $C$ contained in a singular fiber of $f$, using a result of Grauert (cf. \cite{Gra}) it follows that $C^2\,<\,0$, because $C$ is
contracted to a point under the proper complex analytic map $f$.
 
Clearly, $C$ defines a complex analytic line bundle $\mathcal{O}_{\widetilde X}(C)$. Since $C^2\,<\,0$,
it follows that $\mathcal{O}_{\widetilde X}(C)$ is a non-trivial element 
in the analytic Picard group $H^1(\wt{X},\,\mathcal{O}_{\wt{X}}^*)$. The exponential sequence of complex analytic sheaves, $$0 \,\longrightarrow\, \Z \,\longrightarrow\, 
\mathcal{O}_{\wt{X}} \,\longrightarrow\, \mathcal{O}_{\wt{X}}^*\,\longrightarrow\, 0$$ induces a long exact sequence of sheaf 
cohomologies, viz.
\begin{equation}\label{eqn: sheaf cohomology sequence}
\cdots \,\longrightarrow\, H^1(\wt{X},\, \mathcal{O}_{\wt{X}}) \,\longrightarrow\, H^1(\wt{X},\,
\mathcal{O}_{\wt{X}}^*) \,\xrightarrow{\,\,c_1\,\,}\, H^2(\wt{X},\,\Z)\,\longrightarrow\, \cdots.
\end{equation}
If possible, let $$\mathcal{O}_{\widetilde X}(C)\,\,\in\,\,{\rm Im}\left(H^1(\wt{X},\, \mathcal{O}_{\wt{X}}) \,\longrightarrow\, H^1(\wt{X},\,
\mathcal{O}_{\wt{X}}^*)\right).$$ Thus the line bundle $\mathcal{O}_{\widetilde X}(C)|_{C}$ on $C$ lies in the image of the map
$H^1(C,\, \mathcal{O}_{C}) \,\longrightarrow\, H^1(C, \,\mathcal{O}_{C}^*)$. Hence $\deg(\mathcal{O}_{\widetilde X}(C)|_{C})\,=\,0$, which
implies $C^2\,=\,0$, a contradiction. Therefore $$\mathcal{O}_{\widetilde X}(C)\,\notin\,{\rm Im}\left(H^1(\wt{X},\,
\mathcal{O}_{\wt{X}}) \,\longrightarrow\, H^1(\wt{X}, \,\mathcal{O}_{\wt{X}}^*)\right),$$ and hence from the exactness of
\eqref{eqn: sheaf cohomology sequence} it follows that $c_1(\mathcal{O}_{\widetilde X}(C))$ is a non-trivial element in $H^2(\wt{X};\,\Z)$.
Since $\wt{X}$ is homotopic to $S^2$, we have $H^2(\wt{X};\,\Z)\,\cong\,\Z$. Assume that
$\alpha\,\in\,H^2(\wt{X};\,\Z)$ generates $H^2(\wt{X};\,\Z)$. So $c_1(\mathcal{O}_{\widetilde X}(C))\,=\,n\alpha$ for an integer $n\,\neq\,0$.
 
Since $\pi_1(X)$ is infinite, there are infinitely many complete curves $C_1,\,C_2,\,\ldots$ in $\wt{X}$ 
which are the $\pi_1(X)$-translates of $C$. Also note that, $C_i^2\,=\,C^2\,<\,0$. We will now prove that 
the homology classes $[C_1],\,[C_2],\,\ldots$ corresponding to $C_1,\,C_2,\,\ldots$ respectively are all independent in $H^2(\wt{X};\,\Z)$; here we 
apply the universal coefficient theorem for $H^2(\wt{X};\,\Z)\,\cong\,\Hom(H_2(\wt{X};\,\Z),\,\Z)$. Without 
loss of generality, assume that $C_1$ is a non-trivial $\pi_1(X)$-translate of $C$. Therefore $C$ and $C_1$ 
must lie in two different $\pi_1(X)$-orbits of $\wt{X}$. Hence $C_1\cap C\,=\,\emptyset$, i.e., the 
intersection number $C_1\cdot C\,=\,0$. Let $\mathcal{O}_{\widetilde X}(C_1)$ be the complex analytic line bundle on 
$\wt{X}$ corresponding to the complete curve $C_1$. Since $C_1^2\,<\,0$, by the same argument as above it 
follows that $c_1(\mathcal{O}_{\widetilde X}(C_1))$ is non-trivial in $H^2(\wt{X},\,\Z)$. Thus, 
$c_1(\mathcal{O}_{\widetilde X} (C_1))\,=\,m\alpha$ for an integer $m\,\neq\,0$. Therefore, it is evident 
that $n\cdot c_1(\mathcal{O}_{\widetilde X}(C_1))\,=\,m\cdot c_1(\mathcal{O}_{\widetilde 
X}(C))\,=\,mn\alpha$, and thus
\begin{equation}\label{eqn: degree}
 n\cdot \deg(\mathcal{O}_{\widetilde X}(C_1)|_{C})\,=\,m\cdot \deg(\mathcal{O}_{\widetilde X}(C)|_{C}).
\end{equation}
Note that $\deg(\mathcal{O}_{\widetilde X}(C_1)|_{C})\,=\,C_1\cdot C\,=\,0$ and $\deg(\mathcal{O}_{\widetilde X}(C)|_{C})
\,=\,C^2$. Thus from \eqref{eqn: degree} it follows that and $m\cdot C^2\,=\,0$, which in turn implies that $C^2\,=\,0$ as $m\,\neq\,0$. However, this is a contradiction which in fact proves that this case cannot occur at all.
	
{\bf Case 2.} {\it When $Y$ is a Riemann surface.}

In this case, $f$ is a proper morphism with irreducible general fiber. This situation appears in \cite[\S~2, Theorem 1]{Gur2} where
the author has proved that $b_2(\wt{X})$ is infinite. But in our case, this cannot happen either since $\wt{X}$ is homotopic to $S^2$. This completes the proof. 
\end{proof}

For a better understanding of Theorem \ref{Thm: Characterization of quasi-elliptic compact surfaces with infinite fundamental group1} and 
Theorem \ref{Thm: Characterization of quasi-elliptic compact surfaces with infinite fundamental group2} we mention the following 
classification result due to S. R. Gurjar--P. Pokale (cf. \cite[Lemma 8.3, Theorem 8.4]{GP}).

\begin{theorem}[{\cite{GP}}]
Let $X$ be a smooth compact connected complex $K(\pi,\,1)$ surface. Then the following statements hold:
\begin{enumerate}
\item $X$ is a minimal surface.

\item If $\kappa(X) \,=\, -\,\infty$, then $X$ is an Inoue surface.

\item If $\kappa(X) \,=\, 0$, then $X$ is either an Abelian surface, or a Hyperelliptic surface, or a Kodaira surface.

\item If $\kappa(X) \,=\, 1$, then $X$ is an elliptic surface with $e(X)\,=\,\chi(\OO_X)\,=\,0$.
\end{enumerate}
\end{theorem}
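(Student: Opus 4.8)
The plan is to combine the Enriques--Kodaira classification of minimal surfaces with a single homotopical constraint extracted from asphericity. Write $p\colon\wt X\to X$ for the universal cover, so that $X$ being a $K(\pi,1)$ is equivalent to $\wt X$ being contractible, i.e. $\pi_i(X)=0$ for all $i\ge 2$. The crucial lemma I would establish first is: \emph{on a $K(\pi,1)$ surface every irreducible rational curve is null-homologous}. Indeed, if $C\subset X$ is rational with normalization $\nu\colon\PP^1\to X$, then since $\PP^1$ is simply connected $\nu$ lifts to $\wt\nu\colon\PP^1\to\wt X$; the resulting class lies in $\pi_2(\wt X)\cong\pi_2(X)=0$, so by Hurewicz $\wt\nu_\ast[\PP^1]=0$ in $H_2(\wt X;\Z)\cong\pi_2(\wt X)$, and pushing forward by $p$ gives $[C]=\nu_\ast[\PP^1]=0$ in $H_2(X;\Z)$. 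In particular $C^2=0$ and $C\cdot D=0$ for every divisor $D$.

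Part (1) is then immediate: a $(-1)$-curve has self-intersection $-1\ne 0$, so the lemma forbids it and $X$ is minimal. The same lemma eliminates the Kähler surfaces of Kodaira dimension $-\infty$ at once: on $\PP^2$ a line is rational but not null-homologous, while on a geometrically ruled surface the ruling $\PP^1$ meets a section positively, so in either case the lemma is violated. It also rigidifies the elliptic case: any reducible fiber of an elliptic fibration contains a $(-2)$-curve, and any irreducible singular fiber ($I_1$ or $II$) is itself a rational curve meeting a multisection positively; both possibilities contradict the lemma. Hence a $K(\pi,1)$ elliptic surface has only smooth and multiple-smooth (elliptic) fibers.

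For the remaining classes I would march through the minimal models. For $\kappa(X)=0$ the minimal surfaces are K3, Enriques, abelian, bielliptic and Kodaira surfaces: K3 is simply connected with $b_2=22$ and Enriques is K3-covered, so neither is aspherical, whereas abelian, bielliptic and Kodaira surfaces all have universal cover $\C^2$ and are genuine $K(\pi,1)$-spaces, giving (3). For $\kappa(X)=1$ the previous paragraph shows that every singular fiber is multiple-smooth, so each contributes $0$ to the Euler number and $e(X)=0$; since a minimal properly elliptic surface has $K_X^2=0$, Noether's formula $12\,\chi(\OO_X)=K_X^2+e(X)$ then forces $\chi(\OO_X)=0$, proving (4). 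For the non-Kähler part of $\kappa(X)=-\infty$ the minimal models are the class VII surfaces: Hopf surfaces have universal cover $\C^2\setminus\{0\}\simeq S^3$, which is not contractible, and minimal class VII surfaces with $b_2>0$ carry rational curves of negative self-intersection, again contradicting the lemma; so by Bogomolov's theorem only the Inoue surfaces (with contractible universal cover $\mathbb{H}\times\C$) remain, giving (2).

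The hard part will be the non-Kähler Kodaira-dimension $-\infty$ case: ruling out the minimal class VII surfaces with $b_2>0$ rests on the structure theory of such surfaces to guarantee that they actually contain curves, so that the lemma can bite, and this is the deepest input in the argument. A secondary, more bookkeeping-type obstacle is the explicit identification of the universal covers in the surviving cases (abelian, bielliptic, Kodaira, Inoue) with the contractible spaces $\C^2$ and $\mathbb{H}\times\C$, which is what upgrades ``not excluded by the lemma'' to ``genuinely a $K(\pi,1)$-space''.
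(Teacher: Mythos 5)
A preliminary remark: the paper never proves this statement --- it is imported verbatim from \cite[Lemma 8.3, Theorem 8.4]{GP} as background for its own theorems --- so there is no in-paper argument to compare yours against, and your proposal has to be judged on its own terms. Your central lemma is correct and is the natural engine: since $\PP^1$ is simply connected, the normalization $\nu\colon \PP^1 \to X$ of a rational curve $C$ lifts to the contractible universal cover, forcing $[C] = \nu_*[\PP^1] = 0$ in $H_2(X;\Z)$, hence $C^2 = C\cdot D = 0$ for every divisor $D$. This correctly yields (1), eliminates $\C\PP^2$ and the ruled surfaces, and your treatment of $\kappa(X)=0$ by inspecting the universal covers of the five minimal classes is fine.

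The genuine gap is in case (2), and it is more serious than your closing paragraph suggests. The assertion you lean on --- \emph{every minimal class VII surface with $b_2>0$ carries rational curves (of negative self-intersection)} --- is not ``structure theory''; it is an open problem, essentially the Global Spherical Shell conjecture. What is known unconditionally is Bogomolov's theorem for $b_2=0$ (Hopf or Inoue) and Teleman's gauge-theoretic results: existence of a curve when $b_2=1$ (2005) and of a cycle of rational curves when $b_2=2$ (2010). For $b_2\geq 3$ it is not known that such a surface contains \emph{any} curve, so your lemma has nothing to bite on, and no purely topological obstruction can replace it: an aspherical closed $4$-manifold with $b_1=1$ and $\chi = b_2>0$ is not excluded by anything currently proven (the Hopf and Singer conjectures are themselves open). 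As written, your proof of item (2) is therefore conditional on an open conjecture; an honest write-up must either state that hypothesis explicitly or supply a genuinely new argument for hypothetical curve-free class VII surfaces. Whether the proof in \cite{GP} itself has additional input here cannot be seen from the paper under review.

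There is also a smaller, repairable hole in your $\kappa(X)=1$ case. An irreducible singular fiber (type $I_1$ or $II$) has self-intersection $0$, so the only way your lemma applies is through the vanishing $C\cdot D=0$, and you take $D$ to be a multisection --- but a multisection need not exist: on a non-K\"ahler elliptic surface the fiber class is torsion in $H_2(X;\Z)$, so every curve meets the fiber in degree $0$ and the argument collapses exactly there. The repair is standard: when $b_1(X)$ is even, pair $[F]$ with a K\"ahler class (or an ample divisor in the projective case) to see $[F]\neq 0$, contradicting the lemma; when $b_1(X)$ is odd, invoke the classical fact that relatively minimal elliptic fibrations with odd $b_1$ are quasi-bundles, i.e.\ all fibers are smooth up to multiplicity, so there is nothing to exclude. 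With that patch, your derivation of $e(X)=0$ and then $\chi(\OO_X)=0$ from $K_X^2=0$ and Noether's formula is correct.
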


\begin{remark}
Note that, a complete characterization of smooth irreducible complex projective $K(\pi,\,1)$-surfaces of
general type (i.e., $\kappa\,=\,2$) is not yet achieved. But there are many examples of such surfaces. Any fake projective
plane $X$ satisfies the equality $c_1^2(X)\,=\,3c_2(X)$. From the differential geometric approach of proving the Bogomolov-Miyaoka-Yau inequality by S.-T. Yau (cf. \cite{Yau}) in fact proves that a smooth projective surface of general type satisfying $c_1^2\,=\,c_2$ is uniformized by a disk in $\C^2$.
Hence the universal cover of any fake projective
plane is contractible. Therefore, any fake projective plane is a
$K(\pi,\,1)$-surface. As we noted earlier in Remark \ref{re1} that some characterization is known about fake quadrics as well. There are examples of fake
quadrics that are uniformized by $\mathbb{H}\,\times\, \mathbb{H}$, where $\mathbb{H}$ denotes the upper half of the complex
plane. Therefore these fake quadrics are also examples of $K(\pi,\,1)$-spaces. We refer the reader to \cite{GGH}, especially the last section,
for more details about smooth $K(\pi,\,1)$ algebraic surfaces.
\end{remark}

So in our context, it remains to understand the characterization of the smooth compact complex K{\"a}hler surfaces $X$ whose universal 
cover $\wt{X}$ is homotopic to a real $2$-sphere. More generally, we will investigate the other left-out case when $\rank 
\pi_2(X)\,=\,1$, i.e., $\pi_2(X)\,\otimes_{\Z}\,\Q\,=\,\Q$. Evidently, the fundamental group of such a surface is infinite. 

The following lemma is useful for the further analysis.

\begin{lemma}\label{Lem: Minimality of sphere uniformized compact complex surfaces}
Let $X$ be a smooth compact complex surface such that $\pi_1(X)$ is infinite and $\rank\pi_2(X)$ is finite. Then $X$ is a minimal surface.
\end{lemma}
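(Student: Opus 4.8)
The plan is to argue by contradiction, producing infinitely many linearly independent classes in $H_2(\wt X;\,\Q)$ whenever $X$ fails to be minimal, which would force $\rank \pi_2(X)$ to be infinite. Suppose $X$ is not minimal. By Castelnuovo's contractibility criterion, $X$ then contains a $(-1)$-curve, i.e. a smooth rational curve $E\,\cong\,\PP^1$ with $E^2\,=\,-1$. Let $p\,:\,\wt X\,\longrightarrow\,X$ be the universal covering. First I would analyze $p^{-1}(E)$: the restriction $p^{-1}(E)\,\longrightarrow\,E$ is a covering of $E$, and since $E\,\cong\,\PP^1$ is simply connected each connected component maps biholomorphically onto $E$, while the number of such components equals the index of the (trivial) image of $\pi_1(E)$ in $\pi_1(X)$, hence equals $|\pi_1(X)|$, which is infinite by hypothesis. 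Thus $p^{-1}(E)\,=\,\bigsqcup_{i\geq 1}\wt E_i$ is a disjoint union of infinitely many compact curves $\wt E_i\,\cong\,\PP^1$. Because $p$ is a local biholomorphism, the normal bundle of each $\wt E_i$ is the pullback of the normal bundle of $E$, so $\wt E_i^2\,=\,E^2\,=\,-1$; and since distinct components are disjoint, $\wt E_i\cdot \wt E_j\,=\,0$ for $i\,\neq\,j$.

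Next I would bring in the homotopy hypothesis. As $\wt X$ is simply connected, Hurewicz's theorem gives $\pi_2(\wt X)\,\cong\,H_2(\wt X;\,\Z)$, whence $\dim_\Q H_2(\wt X;\,\Q)\,=\,\rank\pi_2(\wt X)\,=\,\rank\pi_2(X)$, which is finite by assumption. The goal is then to show that the classes $[\wt E_i]\,\in\,H_2(\wt X;\,\Q)$ are linearly independent, contradicting finiteness. For this I would use that intersection with a fixed \emph{compact} complex curve $B\subset\wt X$ defines a linear functional $H_2(\wt X;\,\Q)\,\longrightarrow\,\Q$ sending a class to its intersection number with $B$: if a compact $2$-cycle $A$ bounds a compact $3$-chain $W$, then $A\cdot B\,=\,\partial(W\cap B)$ is the signed count of the boundary of a compact $1$-chain, hence $0$, so the intersection number depends only on the homology class of $A$. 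Pairing a relation $\sum_i a_i[\wt E_i]\,=\,0$ with $\wt E_j$ then yields $-a_j\,=\,\sum_i a_i(\wt E_i\cdot\wt E_j)\,=\,0$ for every $j$, so all $a_j$ vanish; the infinitely many classes $[\wt E_i]$ are therefore independent, giving $\dim_\Q H_2(\wt X;\,\Q)\,=\,\infty$, a contradiction. Hence $X$ is minimal.

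The step I expect to be the main obstacle is the justification that intersection theory behaves well on the \emph{non-compact} surface $\wt X$. Unlike the compact case, $\wt X$ satisfies $H^4(\wt X;\,\Z)\,=\,0$, so the cup-product intersection form on $H^2$ degenerates and one cannot simply pair two arbitrary $2$-classes. The point to get right is that intersection numbers of \emph{compact} curves remain well defined and homologically invariant; formally this is the Poincar\'e--Lefschetz duality pairing $H_2(\wt X)\times H_2^{\mathrm{lf}}(\wt X)\to\Z$ between ordinary and locally finite (Borel--Moore) homology, with each compact curve $\wt E_j$ viewed in the locally finite theory. Once this is set up, the computation $\wt E_i\cdot\wt E_j\,=\,-\delta_{ij}$ shows that the relevant Gram matrix equals $-\mathrm{Id}$ on every finite subfamily, which is exactly the nondegeneracy needed for the independence argument above.
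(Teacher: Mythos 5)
Your proof is correct and amounts to the same argument as the paper's: the paper likewise argues by contradiction, uses Castelnuovo to produce a $(-1)$-curve, and then cites the proof of \cite[Proposition 4.3]{GP} for exactly the step you spell out, namely that the infinitely many disjoint lifts of this curve to $\wt{X}$ give independent classes in $H_2(\wt{X};\,\Q)\,\cong\,\pi_2(X)\otimes_{\Z}\Q$. Your Borel--Moore duality justification of intersection numbers on the non-compact $\wt{X}$ is a sound substitute for the line-bundle/first-Chern-class device the paper employs when it runs the analogous translate argument in Case~1 of the proof of Theorem~\ref{Thm: Characterization of quasi-elliptic compact surfaces with infinite fundamental group2}.
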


\begin{proof}
Suppose that $X$ is not minimal. Then there is a smooth rational curve $C \,\cong \, \PP^1$ with $C^2\,=\,-\,1$. Since $\pi_1(X)$ is
infinite, using the proof of \cite[Proposition 4.3]{GP} it turns out that $\rank(\pi_2(X))$ is infinite --- a contradiction. This
completes the proof.
\end{proof}

Our next result gives a concrete understanding of the smooth complex projective surfaces satisfying finite homotopy rank-sum property. Although many mathematicians have explored numerous features of surfaces of general type, there are still many mysteries about the nature of possible invariants
of these surfaces like the geometric genus 
$p_g$, irregularity $q$, Chern numbers $c_1^2$ and $c_2$, fundamental group etc. Therefore, in the case of general type smooth complex projective surfaces satisfying the finite homotopy rank-sum property, our characterization is based on certain vanishing assumptions on the group cohomologies of the fundamental group of those surfaces. The precise result is as follows:

\begin{theorem}\label{Thm: Characterization of compact surfaces having infinite fundamental group and sphere as universal cover}
Let $X$ be a smooth complex projective surface having an infinite fundamental group. Then the following
two hold:
\begin{enumerate}
\item If $\kappa(X) \,\leq\, 1$ and $\rank \pi_2(X)\,=\,1$, then $X$ is one of the following two types:
\begin{enumerate}
\item $\kappa(X)\,=\,-\,\infty$ and $X$ is a non-rational ruled surface;

\item $\kappa(X)\,=\,1$ and $X$ admits an elliptic fibration $f\,:\,X\,\longrightarrow\, D$ with $D\,\cong\,
\PP^1$, whose every fiber is an elliptic curve with reduced structure, such that $f$ has at most $3$ multiple fibers satisfying
the additional condition that if the number of singular fibers is three, then their multiplicities form a platonic triple.
\end{enumerate}
In both of the above cases, the universal cover $\widetilde X$ is in fact homotopic to $S^2$.

\item If $\kappa(X)\,=\,2$ with $\wt{X}$ being homotopic to $S^2$, the following statements hold:
\begin{enumerate}
\item If $G\,:=\,\pi_1(X)$ and $H^4(G;\,\Q)\,=\,H^5(G;\,\Q)\,=\,0$, then 
\[
H^i(G;\,\Q)\,=\,
\begin{cases}
        \Q & \,\text{if }\, i\,=\,0 \,\text{ or }\, 2;\\
        0 & \text{otherwise};
\end{cases}
\]
and $X$ must be a fake quadric with $\wt{X}$ homotopic to $S^2$, if exists\footnote{All fake quadric surfaces 
known so far are uniformized by complex contractible Riemannian manifolds. Therefore all these known examples 
are in fact examples of $K(\pi,\,1)$-surfaces.}.

\item $\pi_1(X)$ is non-abelian.
\end{enumerate}
\end{enumerate}
\end{theorem}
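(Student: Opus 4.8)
The plan is to argue case by case on $\kappa(X)$, in each case exploiting the homotopy type of the universal cover $\wt X$. Since every surface with $\kappa(X)\le 1$ is of non-general type, Shafarevich's conjecture is available (Remark \ref{Rem: Shafarevich Conj}), so $\wt X$ is holomorphically convex; then $\rank\pi_2(X)=1$ together with Theorem \ref{Thm: Characterization of quasi-elliptic compact surfaces with infinite fundamental group1} yields $\wt X\simeq S^2$, and Lemma \ref{Lem: Minimality of sphere uniformized compact complex surfaces} shows $X$ is minimal. I would then run through the Enriques--Kodaira list. If $\kappa(X)=-\infty$, a minimal surface is $\C\PP^2$ or geometrically ruled over a smooth curve $B$; as $\pi_1(X)\cong\pi_1(B)$ is infinite we must have $g(B)\ge 1$, i.e.\ $X$ is non-rational ruled, and conversely $\wt X\cong\wt B\times\PP^1\simeq S^2$ because $\wt B$ is contractible. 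The value $\kappa(X)=0$ is impossible: the minimal projective surfaces here (abelian, K3, Enriques, bielliptic) have either finite $\pi_1$ or contractible universal cover, so $\rank\pi_2(X)=1$ cannot occur.

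For the remaining case $\kappa(X)=1$ I would use the Iitaka fibration, which is an elliptic fibration $f\colon X\to D$. First I would force $D\cong\PP^1$: passing to $\wt X$ and taking the Stein factorisation of the induced map, the base is the orbifold universal cover $\wt D_{\mathrm{orb}}$, and the analysis of Gurjar (\cite{Gur2}, appearing as Case~2 in the proof of Theorem \ref{Thm: Characterization of quasi-elliptic compact surfaces with infinite fundamental group2}) shows that whenever this base is a non-compact Riemann surface one has $b_2(\wt X)=\infty$, which is incompatible with $\wt X\simeq S^2$. Hence $\wt D_{\mathrm{orb}}$ is compact, so the base orbifold $D(m_1,\dots,m_k)$ has finite orbifold fundamental group; this is exactly the condition $D\cong\PP^1$ with at most three multiple fibres, a platonic triple occurring when there are three. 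Minimality (Lemma \ref{Lem: Minimality of sphere uniformized compact complex surfaces}) together with $\wt X\simeq S^2$ then rules out any fibre with extra rational components, so every fibre is a reduced elliptic curve (possibly with multiplicity), and $\wt X$ is realised as an $\R^2$-bundle over $\wt D_{\mathrm{orb}}\cong S^2$, confirming $\wt X\simeq S^2$. This last case is where I expect the real difficulty: reconciling a spherical base orbifold with the genuine equality $\kappa(X)=1$ and verifying the precise multiple-fibre configuration requires a careful canonical-bundle and orbifold computation, and is the crux of the argument.

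For part (2), where $\kappa(X)=2$ and $\wt X\simeq S^2$, I would run the Cartan--Leray spectral sequence $E_2^{p,q}=H^p(G;H^q(\wt X;\Q))\Rightarrow H^{p+q}(X;\Q)$ for $G=\pi_1(X)$. Since $\wt X\simeq S^2$, only the rows $q=0$ and $q=2$ survive (the latter with the $G$-module $H^2(\wt X;\Q)\cong\Q$, on which deck transformations act trivially), leaving the single differential $d_3\colon H^p(G;\Q)\to H^{p+3}(G;\Q)$. Because $X$ is a closed oriented $4$-manifold, $H^n(X;\Q)=0$ for $n\ge 5$ and Poincaré duality gives $b_1(X)=b_3(X)$ and $b_0(X)=b_4(X)=1$; reading the spectral sequence degree by degree and imposing $H^4(G;\Q)=H^5(G;\Q)=0$ pins down $\dim H^2(G;\Q)=1$ and, using Kähler Hodge symmetry to force $b_1(X)=0$, yields $H^i(G;\Q)=\Q$ for $i=0,2$ and $0$ otherwise. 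This gives $q(X)=0$, $b_2(X)=2$ and $e(X)=4$; Noether's formula with $\kappa(X)=2$ then produces $p_g(X)=0$ and $K_X^2=8$, the numerical invariants of a fake quadric, so $X$ must be one (if any with $\wt X\simeq S^2$ exists).

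Finally, for the non-abelianness in part (2)(b), I would argue by contradiction. If $G$ were abelian then, since $G$ is an infinite Kähler group, Gromov's theorem (\cite{Gro}) that such groups are one-ended forces $G$ to be virtually $\Z^n$ with $n\ge 2$. The Borel description $X\simeq_\Q S^2\times_G EG$ together with Poincaré duality in dimension $4$ then constrains $n=2$, so $H^\ast(X;\Q)\cong H^\ast(\PP^1\times E;\Q)$ for an elliptic curve $E$; but a projective surface with this rational cohomology and virtually abelian fundamental group is birationally ruled, contradicting $\kappa(X)=2$. Hence $G$ is non-abelian. Throughout, the main obstacle is the $\kappa(X)=1$ analysis; the general-type part reduces, once the vanishing hypotheses are in place, to linear algebra in a two-row spectral sequence constrained by Poincaré duality.
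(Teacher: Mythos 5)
Your overall architecture (minimality, Enriques--Kodaira case analysis, Cartan--Leray spectral sequence with two rows) matches the paper's, but there are genuine gaps at the two places where the real work happens. First, in the $\kappa(X)=1$ case, your mechanism for showing every fibre is a (multiple of a) smooth elliptic curve fails. Minimality (Lemma \ref{Lem: Minimality of sphere uniformized compact complex surfaces}) only excludes $(-1)$-curves: minimal elliptic surfaces routinely carry reducible fibres made of $(-2)$-curves, and irreducible singular fibres (nodal or cuspidal rational curves, Kodaira types $I_1$, $II$) have no ``extra rational components'' at all, so your criterion does not see them. Your appeal to Gurjar is also a misapplication: the result of \cite{Gur2} invoked in the paper concerns the case where the Cartan--Remmert reduction of $\wt{X}$ is a \emph{proper} map onto a one-dimensional Stein base, whereas the induced map $\wt{X}\longrightarrow \wt{D}_{\mathrm{orb}}$ is in general not proper. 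Indeed, when $e(X)=0$ and the base orbifold is hyperbolic or Euclidean, $\wt{X}$ is contractible and $b_2(\wt{X})=0$, not $\infty$; that case is excluded by $\rank\pi_2(X)=1$, not by infinite $b_2$. The paper disposes of all of this in one stroke via \cite[Theorem 5.8]{GP}: if $\chi(\OO_X)=e(X)>0$ then $\rank\pi_2(X)$ is infinite, and when $e(X)=0$ the requirement $\pi_2(X)\cong\Z$ forces $D\cong\PP^1$ with at most three multiple fibres and the platonic-triple condition. Some citation of this kind (or a full orbifold/Euler-number analysis) is needed; your two stated mechanisms do not close the case.

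Second, in part (2a) you assume the deck action of $G$ on $H^2(\wt{X};\Q)\cong\Q$ is trivial and, implicitly, that the edge map $H^2(X;\Q)\longrightarrow H^2(\wt{X};\Q)^G$ is surjective (equivalently that $d_3\,:\,E_3^{0,2}\longrightarrow E_3^{3,0}$ vanishes). Neither is automatic, and establishing exactly these two facts is the Claim that occupies most of the paper's proof. If either fails, the spectral sequence gives $b_2(X)=1$ instead of $2$, and then Hodge theory, the Bogomolov--Miyaoka--Yau inequality and Noether's formula yield $q=p_g=0$, $c_1^2(X)=9=3c_2(X)$ --- the numerics of a fake projective plane, which are perfectly consistent and cannot be ruled out by arithmetic alone. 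The paper excludes this branch only by invoking Yau's theorem \cite{Yau}: $c_1^2=3c_2$ in general type forces a ball quotient, whose universal cover is contractible, contradicting $\wt{X}\simeq S^2$. Your sketch never mentions Yau or BMY; relatedly, ``K\"ahler Hodge symmetry forces $b_1(X)=0$'' has no mechanism (symmetry only gives that $b_1$ is even; one needs $e(X)>0$ from BMY together with $e(X)=4-2b_1(X)$). The same looseness appears in (2b): ``Borel description plus Poincar\'e duality constrains $n=2$'' and ``a surface with this cohomology and virtually abelian $\pi_1$ is birationally ruled'' are assertions, not arguments. The paper instead passes to a finite \'etale cover with $\pi_1\cong\Z^m$, pins down $m=2$ from the degree-shift isomorphism \eqref{eqn: group cohomology shift by 3} applied to $H^\ast(\Z^m;\Q)$, and derives the contradiction $e(X)\leq 0$ against $e(X)>0$; your final classification claim could be replaced by the standard fact that minimal surfaces of general type have $\chi(\OO_X)\geq 1$, but as written the step is unproven.
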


\begin{proof}
Let $p\,:\, \wt{X} \,\longrightarrow\, X$ be the universal covering. By using Hurewicz's theorem followed by the universal coefficient theorem, we get
$$\pi_2(X)\,\cong\,\pi_2(\wt{X})\,\cong\,H_2(\wt{X};\,\Z)\,\cong\,H^2(\wt{X};\,\Z).$$
    
{\it Proof of (1).}\,\, As was observed in Lemma
\ref{Lem: Minimality of sphere uniformized compact complex surfaces}, $X$ is a minimal surface. We will use the classification
due to Enriques--Kodaira of smooth complex compact minimal surfaces of non-general type. 

{\bf Case 1.} {\it When $\kappa(X)\,=\,-\,\infty$.}

Using the minimality, it follows that $X$ is isomorphic to either $\PP^2$ or a
ruled surface, which, by definition, is topologically a $\PP^1$-bundle over a smooth complete curve $C$. Since $\pi_1(X)$ is infinite, $X$ can't be isomorphic to $\PP^2$. Now consider the ruled surface case. We have the following long exact sequence of homotopy groups
\begin{equation}\label{u1}
\cdots\,\longrightarrow\,\pi_2(\PP^1)\,\longrightarrow\,\pi_2(X)\,\longrightarrow\,\pi_2(C)\,
\longrightarrow\,\pi_1(\PP^1)\,\longrightarrow\,\pi_1(X)\,\longrightarrow\,\pi_1(C)\,\longrightarrow\,(1).
\end{equation}
This implies that $\pi_1(X)\,\cong\,\pi_1(C)$ as $\PP^1$ is simply connected. As $\pi_1(X)$ is infinite, $C$ is of positive genus, and thus $C$ is an Eilenberg-MacLane $K(\pi,\,1)$-space. Thus, $X$ is non-rational. From \eqref{u1} it follows that $\pi_i(X)\,\cong\,\pi_i(\PP^1)\,=\,\pi_i(S^2)$ for all $i\,>\,1$.
By Hurewicz's theorem, $H_2(\wt{X};\,\Z)\,\cong\,\pi_2(\wt{X})\,\cong\,\pi_2(X)\,\cong\,\pi_2(S^2)\,\cong\,\Z$. Since $\wt{X}$ is a holomorphically convex manifold in this case (see Remark \ref{Rem: Shafarevich Conj}),
we have $H_3(\wt{X};\,\Z)\,=\,0$ (using \cite[Proposition 3.7]{GP}), and $\wt{X}$ being a non-compact $4$-manifold it follows that $H_4(\wt{X};\,\Z)\,=\,0$. This implies that $\wt{X}$
is a Moore $M(\Z,\,2)$-space, i.e., $\wt{X}$ is homotopic to $S^2$.

    {\bf Case 2.} {\it When $\kappa(X)\,=\,0$.}

It is known that, in this case, $X$ is uniformized by either a $K3$ surface or $\C^2$. Hence there is no such surface whose universal cover is homotopic to $S^2$.

    {\bf Case 3.} {\it When $\kappa(X)\,=\,1$.}

In this case, $X$ admits an elliptic fibration $f\,:\,X\,\longrightarrow\,D$. By using \cite[Theorem 5.8]{GP}, it 
turns out that $\chi(\mathcal{O}_X)\,=\,e(X)\,=\,0$, since otherwise $\rank \pi_2(X)$ is strictly bigger than $1$ which is not possible 
in our situation. Again by using \cite[Theorem 5.8]{GP}, it is evident that the only possibility for $\pi_2(X)$ being isomorphic to $\Z$ is that $D\,\cong\,\PP^1$ and $f$ has at 
most $3$ singular fibers satisfying the additional condition that if there are exactly three singular fibers then their multiplicities form a 
platonic triple (cf. \cite[Theorem 5.]{GP}).

In the case of elliptic surfaces $X$ it is known that $\wt{X}$ is a holomorphically convex 
manifold (cf. \cite{Gur-Shas}). Therefore, for the smooth projective surfaces admitting above elliptic fibrations, since $$e(X)\,=2\,-\,2b_1(X)\,+\,b_2(X)\,=\,\,0,$$ it turns out that $\pi_1(X)$ is infinite. Once again, it is verified in \cite{GP} that $\pi_2(X)\,\cong\,\Z$. Hence the same argument as in Case 1 proves that $\wt{X}$ is indeed homotopic to $S^2$ for 
such elliptic surfaces.
    
    {\it Proof of (2a).}\,\, Consider the following Cartan--Serre spectral sequence with rational coefficients:
$$E_2^{p,q}\,:=\,H^p(G;\,H^q(\wt{X};\,\Q))\,\Longrightarrow\, H^{p+q}(X;\,\Q).$$
    As $H^i(\wt{X};\,\Q)\,=\,0$, for $i\,\neq\,0,\,2$, we have the following exact sequence of $\Q$-vector spaces
    \begin{alignat}{2}\label{exact seqn}
            0&\,\longrightarrow\,H^2(G;\,\Q)\,\longrightarrow\, H^2(X;\,\Q)\,\longrightarrow\, H^2(\wt{X};\,\Q)^G \nonumber\\ 
            &\, \longrightarrow\, H^3(G;\,\Q)\,\longrightarrow\, H^3(X;\,\Q)\,\longrightarrow\, H^1(G;\,H^2(\wt{X};\,\Q)) \nonumber\\
            &\, \longrightarrow\, H^4(G;\,\Q)\,\longrightarrow\, H^4(X;\,\Q)\,\longrightarrow\, H^2(G;\,H^2(\wt{X};\,\Q))\, \longrightarrow\, H^5(G;\,\Q)\,\longrightarrow\, 0;
    \end{alignat}
    and the isomorphisms
    \begin{equation}\label{eqn: group cohomology shift by 3}
        H^{k+3}(G;\,\Q)\,\cong\,H ^k(G;\,H^2(\wt{X};\,\Q)), \quad \text{for all } k\,\geq\,3
    \end{equation}
    using a similar argument appearing in \cite[Proposition 5.1, Remark 5.2]{BMP}.

    Now using the universal coefficient theorem, followed by Hurewicz's theorem, we have $$H^2(\wt{X};\,\Q)\,\cong \,\Hom(H_2(\wt{X};\,\Q),\, \Q)\,\cong\,\Hom(\Q,\,\Q)\,\cong\,\Q.$$
    Since $H^{4}(G;\,\Q)\,=\,H^{5}(G;\,\Q)\,=\,0$, it follows from the exact sequence \eqref{exact seqn} that 
    \begin{equation}\label{eqn5}
        H^{2}(G;\,\Q)\,\cong\,H^{4}(X;\,\Q)\,\cong\,\Q.
    \end{equation}

    {\bf Claim.} {\it Assume that $\kappa(X)\,=\,2$. Then
\begin{itemize}
\item $H^2(\wt{X};\,\Q)^G\,\cong\,\Q$,

\item the $\Q$-linear map $$H^2(X;\,\Q)\,\longrightarrow\, H^2(\wt{X};\,\Q)^G$$ is surjective.
\end{itemize}}
    
    {\it Proof of Claim.}\,\, Since $H^2(\wt{X};\,\Q)\,\cong\,\Q$, it follows that $H^2(\wt{X};\,\Q)^G$ is either trivial or
it is isomorphic to $\Q$. If possible, let $H^2(\wt{X};\,\Q)^G$ be trivial. Consequently, from \eqref{exact seqn} and
\eqref{eqn5} it follows that $H^2(X;\,\Q)\,\cong\, H^2(G;\,\Q)\,\cong\,\Q$. Hence using the Hodge decomposition, it is evident that $$b_2(X)\,=\,h^{1,1}(X)\,=\,1, \quad \text{and} \quad p_g(X)\,=\,h^{2,0}(X)\,=\,h^{0,2}(X)\,=\,0.$$
    This implies that
    \begin{equation}\label{eqn6}
    e(X)\,=\,2\,-\,2b_1(X)\,+\,b_2(X)\,=\,3\,-\,2b_1(X).
    \end{equation} 
Since $X$ is a minimal surface of general type, then $c_1^2(X)\,>\,0$ (cf. \cite[Chapter VII, Theorem 2.2]{BHPV}. Next, using the Bogomolov-Miyaoka-Yau inequality, it follows that $c_1^2(X)\,\leq\,3c_2(X)$. Thus, 
$$3e(X)\,=\,3c_2(X)\,\geq\,c_1^2(X)\,=\,(K_X)^2\,>\,0,$$ whence $e(X)\,>\,0$. Hodge decomposition yields
that $$b_1(X)\,=\,h^{1,0}(X)\,+\,h^{0,1}(X)\,=\,2h^{1,0}(X)\,=\,2q(X).$$ Therefore, from \eqref{eqn6} it follows immediately that
$b_1(X)\,=\,q(X)\,=\,0$, and hence $c_2(X)\,=\,e(X)\,=\,3$. Now, as we have observed above that $p_g(X)\,=\,q(X)\,=\,0$,
it follows that $$\chi(\OO_X)\,=\,1\,-\,q(X)\,+\,p_g(X)\,=\, 1.$$ Consequently, Noether's formula says that $c_1^2(X)\,=\,12\chi(\OO_X)\,
-\,c_2(X)\,=\,9$. Therefore, it follows that $c_1^2(X)\,=\,3c_2(X)\,=\, 9$ --- a contradiction, since Yau proved that for a general type smooth complex compact surface $M$, if $c_1^2(M)\,=\,3c_2(M)$, then $M$ is uniformized by a disk in $\C^2$ (cf. \cite{Yau}), in particular, then the universal cover $\wt{M}$ of $M$ would become contractible but in our case, $\wt{X}$ is homotopic to $S^2$, a contradiction. This concludes that $H^2(\wt{X};\,\Q)^G\,\cong\,\Q$.

Thus the $\Q$-linear map $H^2(X;\,\Q)\,\longrightarrow\, H^2(\wt{X};\,\Q)^G$ is either surjective or it is
the trivial homomorphism. If the latter happens, once again from \eqref{exact seqn} and \eqref{eqn5} it follows that
$H^2(X;\,\Q)\,\cong\, H^2(G;\,\Q)\,\cong\,\Q$. This leads to the same contradiction as above. Therefore, the above $\Q$-linear map is non-zero which proves the claim.

Consequently, it turns out that the exact sequence \eqref{exact seqn} splits into the following two short exact sequences:
    \begin{align}\label{exact seqn 1}
            0\,\longrightarrow\,H^2(G;\,\Q)\,\longrightarrow\, H^2(X;\,\Q)\,\longrightarrow\, H^2(\wt{X};\,\Q)^G\,\longrightarrow\,0
    \end{align}
    and
    \begin{align}\label{exact seqn 2}
            0\, \longrightarrow\, H^3(G;\,\Q)\,\longrightarrow\, H^3(X;\,\Q)\,\longrightarrow\, H^1(G;\,\Q)\,\longrightarrow\,0.  
    \end{align}
Using the claim and \eqref{eqn5} it follows from \eqref{exact seqn 1} that $H^2(X;\,\Q)\,\cong\, \Q^2$ as $\Q$-vector spaces. Therefore,
the Hodge decomposition gives that $$b_2(X)\,=\,h^{1,1}(X)\,=\,2, \quad \text{and} \quad p_g(X)\,=\,h^{2,0}(X)\,=\,h^{0,2}(X)\,=\,0.$$
    This implies that
    \begin{equation}\label{eqn7}
    e(X)\,=\,2\,-\,2b_1(X)\,+\,b_2(X)\,=\,4\,-\,2b_1(X).
    \end{equation}
    As we observed in the proof of the above claim that $e(X)$ has to be positive, hence once again, it implies that $b_1(X)\,=\,q(X)\,
=\,0$, whence $c_2(X)\,=\,e(X)\,=\,4$. Now, $p_g(X)\,=\,q(X)\,=\,0$ yields
that $$\chi(\OO_X)\,=\,1\,-\,q(X)\,+\,p_g(X)\,=\, 1.$$ Hence as earlier, Noether's formula once again says that
$c_1^2(X)\,=\,12\chi(\OO_X)\,-\,c_2(X)\,=\,8$. Consequently, $$p_g(X)\,=\,q(X)\,=\,0, \quad c_1^2(X)\,=\,2c_2(X)\,=\, 8,$$
so $X$ is a fake quadric.

Since $b_1(X)\,=\,0$, using Poincar{\'e} duality, we have $$H^3(X;\Q) \,\cong\, H_1(X;\,\Q)\,=\,0,$$ and thus from
\eqref{exact seqn 2} it follows immediately that $H^3(G;\,\Q)\,=\,H^1(G;\,\Q)\,=\, 0$. Now using
\eqref{eqn: group cohomology shift by 3} we get that $H^i(G;\,\Q)\,=\, 0$ for all $i\,\neq\,0,\,2$.

    {\it Proof of (2b).}\,\, Suppose that $G\,:=\,\pi_1(X)$ is abelian. Since $X$ is projective, $G$ is finitely presented. Using
the structure theorem of finitely generated abelian groups it follows that, after passing to a suitable finite {\'e}tale covering of $X$,
we have a projective surface $Y$ such that $\pi_1(Y)\,\cong\,\Z^m$ for some positive integer $m$ (note that $m\,>\,0$, which follows
from the fact that $\pi_1(X)$ is infinite). Evidently, $\kappa(Y)\,=\,\kappa(X)\,=\,2$ as $Y \,\longrightarrow\, X$ is a finite
{\'e}tale cover and $\wt{X}$ is the universal cover of $Y$ too. Thus, without loss of generality, we can assume that
$G\,=\,\pi_1(X)\,=\, H_1(X;\,\Z)\,=\,\Z^m$ for some positive integer $m$. Using the Hodge theory, it is easy to observe that $m$
is an even positive integer. We know that for all $i\,>\,0$, $$H^i(G;\,\Z)\,\cong\,H^i(\Z^m;\,\Z)\,\cong\,
H^i\left(K(\Z^m,\,1);\,\Z\right)\,\cong\,H^i(\underbrace{S^1\times\cdots \times S^1}_{m-\text{copies}};\,\Z)\,\cong\,
\Z^{m\choose i}.$$ Since ${m \choose i}\,=\,0$ for $i\,\gg \, 0$, from \eqref{eqn: group cohomology shift by 3} it
follows that $H^3(G;\,\Z)\,=\,0$, and this implies that ${m \choose 3}\,=\,0$, whence $m\,=\,2$ as $m$ is a positive even integer.
Therefore, $$H^1(X;\,\Q)\,=\,H^1(G;\,\Q)\,=\,H^1(\Z^2;\,\Q)\,=\,\Q^2 \quad \text{and} \quad H^2(G;\,\Q)\,=\,H^2(\Z^2;\,\Q)\,=\,\Q.$$ Thus we have the following short exact sequence as a part of the exact sequence in \eqref{exact seqn}:
    \[
            0\,\longrightarrow\,\Q\,\longrightarrow\, H^2(X;\,\Q)\,\longrightarrow\, H^2(\wt{X};\,\Q)^G\,\longrightarrow\,0.
    \]
    Clearly, as $\wt{X}$ is homotopic to $S^2$, so $H^2(\wt{X};\,\Q)^G$ is either trivial or isomorphic to $\Q$. Thus,
in both cases, we have $b_2(X)\,\leq\,2$. We observed above that $b_1(X)\,=\,2$. Hence
$$e(X)\,=\,2\,-\,2b_1\,+\,b_2\,=\,b_2\,-\,2\,\leq\, 0,$$ a contradiction since $e(X)$ has to be positive as we observed in
the proof of the claim in part (\emph{2a}). This contradiction proves that $\pi_1(X)$ is non-abelian.
\end{proof}

\begin{prop}\label{Prop: No compact surface of general type having fundamental group of a compact curve and sphere as universal cover}
    Let $X$ be a smooth projective surface such that its universal cover is homotopic to $S^2$. Then the following statements hold:
    \begin{enumerate}
        \item If $X$ contains a smooth rational curve, then $X$ must be a non-rational ruled surface.
        \item If $G\, :=\,\pi_1(X)$ is a surface group, i.e, the fundamental group of a Riemann surface, then $X$ is either a non-rational ruled surface or an elliptic surface admitting a relatively minimal elliptic fibration $\varphi\,:\, X \,\longrightarrow\, D$ with $D\,\cong \, \PP^1$ such that $\chi(X,\,\mathcal{O}_X)\,=\,0$ and $\varphi$ has at most three multiple fibers with multiplicities forming a platonic triple whenever there are exactly three multiple fibers.
    \end{enumerate}     
\end{prop}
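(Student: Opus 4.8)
The plan is to prove the two assertions separately, in each case first pinning down $\kappa(X)$ and then invoking the classification already in hand. First observe that the hypothesis $\wt X\,\simeq\,S^2$ gives $H^2(\wt X;\,\Z)\,\cong\,\Z$ and, by Hurewicz, $\pi_2(X)\,\cong\,\pi_2(\wt X)\,\cong\,H_2(\wt X;\,\Z)\,\cong\,\Z$, so $\rank\pi_2(X)\,=\,1$. The group $\pi_1(X)$ must be infinite: otherwise $\wt X$ would be a compact complex surface and hence would have $H^4(\wt X;\,\Z)\,\neq\,0$, contradicting $\wt X\,\simeq\,S^2$. In particular $\rank\pi_2(X)$ is finite, so Lemma \ref{Lem: Minimality of sphere uniformized compact complex surfaces} applies and $X$ is minimal.

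For (1), let $C\,\cong\,\PP^1$ be a smooth rational curve in $X$ and let $p\,:\,\wt X\,\longrightarrow\,X$ be the universal cover. Since $C$ is simply connected, $p^{-1}(C)$ is a disjoint union of copies of $\PP^1$, each mapped biholomorphically onto $C$; as $\pi_1(X)$ is infinite there are infinitely many such lifts $\wt C_1,\,\wt C_2,\,\ldots$, pairwise disjoint, with $\wt C_i^2\,=\,C^2$. The key step is to show $C^2\,\geq\,0$. Suppose to the contrary that $C^2\,<\,0$. Exactly as in Case 1 of the proof of Theorem \ref{Thm: Characterization of quasi-elliptic compact surfaces with infinite fundamental group2}, the exponential sequence together with $H^1(\PP^1,\,\OO_{\PP^1})\,=\,0$ shows that $c_1(\OO_{\wt X}(\wt C_i))\,=\,m_i\alpha$ with $m_i\,\neq\,0$, where $\alpha$ generates $H^2(\wt X;\,\Z)\,\cong\,\Z$. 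Comparing two disjoint lifts through $m_2\cdot c_1(\OO_{\wt X}(\wt C_1))\,=\,m_1\cdot c_1(\OO_{\wt X}(\wt C_2))$ and evaluating on $[\wt C_1]$ gives $m_2\,C^2\,=\,m_1(\wt C_2\cdot\wt C_1)\,=\,0$, hence $C^2\,=\,0$, a contradiction. With $C^2\,\geq\,0$ established, adjunction yields $K_X\cdot C\,=\,-2-C^2\,\leq\,-2\,<\,0$, so $K_X$ is not nef; since $X$ is minimal this forces $\kappa(X)\,=\,-\infty$, and the Enriques--Kodaira classification together with the infiniteness of $\pi_1(X)$ (which rules out $\PP^2$ and the rational, hence simply connected, ruled surfaces) shows that $X$ is a non-rational ruled surface.

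For (2), suppose $G\,=\,\pi_1(X)$ is a surface group, necessarily the fundamental group of a closed orientable surface of genus $g\,\geq\,1$ (genus $0$ is excluded as $G$ is infinite). Its classifying space is a closed aspherical $2$-manifold, so $H^i(G;\,\Q)\,=\,0$ for all $i\,\geq\,3$ while $H^1(G;\,\Q)\,\cong\,\Q^{2g}\,\neq\,0$. I claim $\kappa(X)\,\neq\,2$: indeed, if $\kappa(X)\,=\,2$, then since $H^4(G;\,\Q)\,=\,H^5(G;\,\Q)\,=\,0$, part (2a) of Theorem \ref{Thm: Characterization of compact surfaces having infinite fundamental group and sphere as universal cover} would force $H^1(G;\,\Q)\,=\,0$, contradicting $H^1(G;\,\Q)\,\cong\,\Q^{2g}\,\neq\,0$. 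Hence $\kappa(X)\,\leq\,1$, and because $\rank\pi_2(X)\,=\,1$ part (1) of Theorem \ref{Thm: Characterization of compact surfaces having infinite fundamental group and sphere as universal cover} applies directly: $X$ is either a non-rational ruled surface ($\kappa(X)\,=\,-\infty$), or an elliptic surface carrying a relatively minimal elliptic fibration $\varphi\,:\,X\,\longrightarrow\,D$ with $D\,\cong\,\PP^1$, $\chi(X,\,\OO_X)\,=\,0$, and at most three multiple fibers whose multiplicities form a platonic triple when there are exactly three; the intermediate case $\kappa(X)\,=\,0$ is excluded there, since such surfaces are uniformized by a $K3$ surface or by $\C^2$, neither homotopy equivalent to $S^2$.

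The only genuinely geometric step is the inequality $C^2\,\geq\,0$ in part (1); everything else is a matter of feeding the resulting invariants into Lemma \ref{Lem: Minimality of sphere uniformized compact complex surfaces} and Theorem \ref{Thm: Characterization of compact surfaces having infinite fundamental group and sphere as universal cover}. I expect the main subtlety to lie in the Chern-class bookkeeping of that step: ensuring that infinitely many pairwise disjoint lifts of $C$ genuinely exist (which uses that $C$ is simply connected, so $p^{-1}(C)$ splits into disjoint biholomorphic copies) and that each defines a nonzero class in $H^2(\wt X;\,\Z)\,\cong\,\Z$. Part (2), by contrast, presents no new difficulty: the fact that a surface group is a Poincar{\'e} duality group of dimension two kills $H^{\geq 3}(G;\,\Q)$, which is precisely the input needed to apply part (2a) and discard the general-type case.
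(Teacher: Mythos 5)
Your part (1) is correct, and it is a mild but genuine variant of the paper's argument. Both proofs rest on the same key computation, which you reproduce correctly: the lifts of $C$ to $\wt{X}$ are pairwise disjoint copies of $\PP^1$ with the same self-intersection, each defining a nonzero multiple of the generator of $H^2(\wt{X};\,\Z)\,\cong\,\Z$, and comparing two disjoint lifts forces $C^2\,=\,0$ (so in particular $C^2\,\geq\,0$). After that the routes diverge: the paper feeds the rational curve with $C^2\,=\,0$ into the proof of \cite[Chapter V, Proposition 4.3]{BHPV} to produce a $\PP^1$-bundle structure on $X$ with $C$ as a fiber, whereas you use adjunction ($K_X\cdot C\,=\,-2-C^2\,<\,0$), the fact that a minimal surface with $\kappa\,\geq\,0$ has nef canonical bundle, and the Enriques--Kodaira classification together with Lemma \ref{Lem: Minimality of sphere uniformized compact complex surfaces}. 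Both are valid; yours trades the explicit fibration construction for the nef criterion.

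Part (2) has a genuine, though easily repaired, gap. You assert that an infinite surface group is \emph{necessarily} the fundamental group of a closed orientable surface of genus $g\,\geq\,1$. That does not follow from the proposition's definition: a surface group here means the fundamental group of a Riemann surface, and Riemann surfaces need not be compact. Open Riemann surfaces have free fundamental groups, so for instance $\Z\,=\,\pi_1(\C^*)$ and the finitely generated free groups $F_n$ are infinite surface groups in this sense; infiniteness of $G$ only excludes genus $0$, it does not force compactness of $S$. Your exclusion of $\kappa(X)\,=\,2$ therefore covers only the closed case. The missing case is disposed of in one line, exactly as in the paper's proof: if $G\,=\,\pi_1(S)$ with $S$ open, then $S$ is homotopy equivalent to a wedge of circles, so $H^i(G;\,\Q)\,=\,0$ for all $i\,\geq\,2$; in particular $H^4(G;\,\Q)\,=\,H^5(G;\,\Q)\,=\,0$, so Theorem \ref{Thm: Characterization of compact surfaces having infinite fundamental group and sphere as universal cover}(2a) still applies when $\kappa(X)\,=\,2$ and yields $H^2(G;\,\Q)\,\cong\,\Q$, contradicting $H^2(G;\,\Q)\,=\,0$. (Alternatively, this case can be ruled out a priori: by Gromov's theorem cited in the paper, infinite K{\"a}hler groups are one-ended, while infinite free groups have two or infinitely many ends.) With that case added, the remainder of your part (2) --- applying part (1) of the same theorem once $\kappa(X)\,\leq\,1$, with $\kappa(X)\,=\,0$ excluded because such surfaces are covered by a $K3$ surface or $\C^2$ --- coincides with the paper's proof.
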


\begin{proof}
First note that, since the universal cover of $X$ is homotopic to $S^2$, the fundamental group of $X$ must be infinite.
    
    {\it Proof of (1).}\,\, Suppose $X$ contains a smooth rational curve $C\,\cong\,\PP^1$. Since $\pi_1(X)$ is infinite, $X$ is a minimal surface and $C^2\,=\, 0$, as observed earlier. Thus, using the proof of \cite[Chapter V, Proposition 4.3]{BHPV}, we conclude that $X$ is a ruled surface admitting a $\PP^1$-bundle structure $f\,:\,X\,\longrightarrow\, D$ on a smooth projective curve $D$ such that $C$ is a full fiber of $f$. Since $\pi_1(X)$ is infinite, $D$ has positive genus. Therefore, $X$ is a non-rational ruled surface.

    {\it Proof of (2).}\,\, Suppose that $G$ is a surface group and assume that $G\,=\,\pi_1(S)$ for a Riemann surface $S$. Since, it is observed earlier that $G\,=\,\pi_1(X)$ is infinite, therefore $S$ is a $K(G,\,1)$-space by the Uniformization Theorem. Hence, 
    \begin{equation}\label{eqn8}
        H^i(G;\,\Z)\,\cong\,H^i(S;\,\Z), \quad \text{for all } i\,\in\,\N.
    \end{equation}
    Suppose, $X$ is of general type. Then the hypothesis of part (a) in the above theorem is satisfied and thus 
    \begin{equation}\label{eqn9}
    H^i(G;\,\Q)\,=\,
    \begin{cases}
        \Q & \text{if } i\,=\,0 \text{ or } 2;\\
        0 & \text{otherwise};
    \end{cases}
    \end{equation}
    and $X$ is a fake quadric surface. If $S$ is a compact Riemann surface, then $g$, the genus of $S$ has to be positive. Thus $H^1(S;\,\Q)\,\cong\, \Q^{2g}$, contradicting the above equations \eqref{eqn8} and \eqref{eqn9}. Now, if $S$ is an open Riemann surface, then a similar contradiction arises since in that case, $H^2(S;\,\Q)\,=\,0$. This implies that $\kappa(X)\,<\,2$ and the rest follows from Theorem \ref{Thm: Characterization of compact surfaces having infinite fundamental group and sphere as universal cover}.
\end{proof}

For the next result, we first recall the definition of \emph{homotopical height} from \cite{BMP}.

Given a group $G$ and a subclass $\mathcal{C}$ of the class of smooth (not necessarily closed) manifolds of positive dimension (e.g. symplectic ($\mathcal{SP}$), K{\"a}hler ($\mathcal{K}$), Stein ($\mathcal{S}$) etc.), it is an old and well-known problem to ﬁnd a manifold $M_G\,\in\, \mathcal{C}$ such that $\pi_1(M_G)\,=\, G$. In \cite{BMP}, the authors refined this concept in the following way:
\begin{definition}
      For a finitely presented group $G$ and a positive integer $r$ ﬁnd $M_{G,r}\,\in\, \mathcal{C}$, if exists, such that $\pi_1(M_{G,r})\,=\, G$ and $\pi_i(M_{G,r})\,=\, 0$ for $1 \,<\, i \,<\, r$. The \emph{$\mathcal{C}$--homotopical height} of $G$ is denoted by ${\rm ht}_{\mathcal{C}}(G)$ and is defined as follows:
      $${\rm ht}_{\mathcal{C}}(G)\,:=\,\max\,\{r\mid \exists \text{ a manifold } M_{G,r} \text{ as above in } \mathcal{C}\}.$$ The ${\rm ht}_{\mathcal{C}}(G)$ is defined to be $-\infty$ if $G$ is not the fundamental group of any manifold in $\mathcal{C}$; and if $\pi_2(M) \,\neq\, 0$ for all $M \,\in\, \mathcal{C}$ with $\pi_1(M)\,=\,G$, then ${\rm ht}_{\mathcal{C}}(G)$ is defined to be $2$.
\end{definition}
Proof of our next result uses the following version of the Generalized Riemann Existence Theorem due to Grauert-Remmert (cf. \cite{Gra-Rem}).

``\emph{Let $f\,:\, Z \,\longrightarrow\, X$ be a proper surjective complex analytic map with finite fibers, where $Z$ and $X$ are irreducible normal complex spaces. If $X$ is an algebraic variety then so is $Z$}''.

\begin{theorem}\label{Thm: Homotopical height of surfaces uniformized by 2-sphere}
    Let $X$ be a smooth projective surface of general type such that its universal cover is a Stein manifold that is homotopic to $S^2$. Let
$G\,:=\,\pi_1(X)$. Then the following statements hold:
    \begin{enumerate}
        \item If ${\rm ht}_{\mathcal{S}}(G)\,>\,2$, then $H^2(G,\,\Z G)\,=\,0$.
        \item If ${\rm ht}_{\mathcal{S}}(G)\,=\,2$ with $M$ being a smooth complex projective surface realizing ${\rm ht}_{\mathcal{S}}(G)$
and the action of $\pi_1(M)$ on $\pi_2(M)$ is the trivial one, then $H^2(G,\,\Z G)\,=\,0$.
    \end{enumerate}
\end{theorem}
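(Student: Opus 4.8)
The plan is to compute $H^2(G,\,\Z G)$ \emph{geometrically}, by realizing it as a compactly supported cohomology group of the Stein universal cover of a suitable model of $G$, and then to exploit the Stein condition to bound the relevant homotopy dimension. Throughout I write $BG$ for a $K(G,\,1)$, and for a finite-type model $Y$ with $\pi_1(Y)\,=\,G$ and universal cover $\wt{Y}$ I use the standard identification $H^i(Y;\,\Z G)\,\cong\,H^i_c(\wt{Y};\,\Z)$ of cohomology with local coefficients in $\Z G$ with the compactly supported cohomology of the cover. Since $\wt{X}$ is homotopic to $S^2$, Hurewicz gives $\pi_2(X)\,\cong\,H_2(\wt{X};\,\Z)\,\cong\,\Z$; and because $G$ is an infinite (hence, by Gromov, one-ended) K\"ahler group, we already have $H^0(G,\,\Z G)\,=\,H^1(G,\,\Z G)\,=\,0$, so the entire content is the degree-$2$ term. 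The homotopical height is precisely the measure of how connected a projective model of $G$ with Stein universal cover can be made, and that is exactly the quantity that controls the vanishing range of $H^\ast_c$ of the cover.

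For part (1) the hypothesis ${\rm ht}_{\mathcal S}(G)\,>\,2$ furnishes a manifold $M\,\in\,\mathcal{S}$ with $\pi_1(M)\,=\,G$ and $\pi_2(M)\,=\,0$. First I would note that its universal cover $\wt{M}$ is then $2$-connected and, being a cover of a Stein manifold, is itself Stein of complex dimension $n\,=\,\dim_{\C}M$; hence $\wt{M}$ has the homotopy type of a CW complex of real dimension $\le\,n$. Since $\pi_2(M)\,=\,0$, the classifying map $M\,\longrightarrow\,BG$ is $3$-connected, so $H^2(G,\,\Z G)\,\cong\,H^2(M;\,\Z G)\,\cong\,H^2_c(\wt{M};\,\Z)$. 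Finally, Poincar\'e--Lefschetz duality on the oriented open $2n$-manifold $\wt{M}$ gives $H^2_c(\wt{M};\,\Z)\,\cong\,H_{2n-2}(\wt{M};\,\Z)$, and this vanishes: for $n\,\ge\,3$ because $2n-2\,>\,n$ exceeds the homotopy dimension, and for $n\,=\,2$ because a $2$-connected Stein surface is contractible. Thus $H^2(G,\,\Z G)\,=\,0$.

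For part (2) only a realizer $M$ of ${\rm ht}_{\mathcal S}(G)\,=\,2$ is at hand, so $\pi_2(M)\,\neq\,0$ and the map $M\,\longrightarrow\,BG$ is merely $2$-connected; the clean comparison of part (1) breaks down, and this is where the genuine difficulty lies. Here I would run the Serre (Cartan--Leray) spectral sequence of the fibration $\wt{M}\,\longrightarrow\,M\,\longrightarrow\,BG$ with coefficients in the local system $\Z G$,
$$E_2^{p,q}\,=\,H^p\bigl(G;\,H^q(\wt{M};\,\Z)\otimes_{\Z}\Z G\bigr)\,\Longrightarrow\,H^{p+q}(M;\,\Z G)\,\cong\,H^{p+q}_c(\wt{M};\,\Z),$$
where, since $\wt{M}$ is a simply connected Stein surface with free $\pi_2$ (so homotopy equivalent to a wedge of $2$-spheres), only the rows $q\,=\,0$ and $q\,=\,2$ survive. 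The assumption that $\pi_1(M)$ acts trivially on $\pi_2(M)\,=\,H_2(\wt{M};\,\Z)$ is exactly what I would use to pin down the $q\,=\,2$ row and the edge behaviour of this spectral sequence. The decisive step is then to eliminate the residual degree-$2$ contribution of $\pi_2(M)$: for this I expect to apply the Generalized Riemann Existence Theorem of Grauert--Remmert to the Cartan--Remmert reduction of the Stein surface $\wt{M}$, promoting the relevant fibre structure to an algebraic one and thereby producing a projective model of $G$ with Stein universal cover and \emph{vanishing} $\pi_2$. Such a model would force ${\rm ht}_{\mathcal S}(G)\,>\,2$, which reduces part (2) to the situation of part (1) and yields $H^2(G,\,\Z G)\,=\,0$.

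The hard part will be precisely this last elimination in part (2): reconciling the spectral-sequence bookkeeping with the algebraicity input from the Grauert--Remmert theorem, and verifying that the triviality of the $\pi_1(M)$-action on $\pi_2(M)$ is strong enough to upgrade the Stein realizer to one of strictly larger homotopical height. By contrast, the identifications $H^i(M;\,\Z G)\,\cong\,H^i_c(\wt{M};\,\Z)$, the connectivity of $M\,\longrightarrow\,BG$, and the Stein bound on homotopy dimension are routine, and part (1) should follow almost immediately from them.
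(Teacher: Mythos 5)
There is a genuine gap in your part (1): your dimension analysis treats $n\,\geq\,3$ and $n\,=\,2$ but omits $n\,=\,\dim_{\C}M\,=\,1$, and that omitted case is exactly where the statement you are actually proving fails. A smooth projective curve of positive genus belongs to $\mathcal{S}$ (its universal cover is $\C$ or the unit disk, both Stein) and is aspherical, so it is an admissible realizer $M_{G,r}$ for every $r$; for such an $M$ your own chain of identifications gives $H^2(G,\,\Z G)\,\cong\,H^2_c(\wt{M};\,\Z)\,\cong\,H_{2n-2}(\wt{M};\,\Z)\,=\,H_0(\wt{M};\,\Z)\,=\,\Z\,\neq\,0$. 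Concretely, $G\,=\,\pi_1(\Sigma_g)$ with $g\,\geq\,2$ has ${\rm ht}_{\mathcal{S}}(G)\,>\,2$ (realized by $\Sigma_g$ itself) while $H^2(G,\,\Z G)\,\cong\,\Z$, so the unconditional implication ``${\rm ht}_{\mathcal{S}}(G)\,>\,2\,\Rightarrow\,H^2(G,\,\Z G)\,=\,0$'' that your argument purports to establish is false. The missing step is to exclude that $G$ is a surface group, and this is precisely where the hypotheses on $X$ (general type, Stein universal cover homotopic to $S^2$) must enter; note that your part (1) never uses $X$ at all, which is the tell-tale sign. The paper supplies this step via Proposition \ref{Prop: No compact surface of general type having fundamental group of a compact curve and sphere as universal cover}(2): if $G$ were a surface group, then $X$ would have to be a non-rational ruled surface or an elliptic surface, contradicting $\kappa(X)\,=\,2$; it then invokes \cite[Proposition 6.2]{BMP}, which contains the dichotomy that your duality computation in effect re-derives for $n\,\geq\,2$. (A minor further slip: $M$ is projective, not Stein, so $\wt{M}$ is Stein by the definition of $\mathcal{S}$, not because it covers a Stein manifold.)

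Your plan for part (2) does not work as described. If you could produce a projective model of $G$ with Stein universal cover and vanishing $\pi_2$, you would have proved ${\rm ht}_{\mathcal{S}}(G)\,>\,2$, which flatly contradicts the standing hypothesis ${\rm ht}_{\mathcal{S}}(G)\,=\,2$; as written this is not set up as a proof by contradiction (you never feed an assumption $H^2(G,\,\Z G)\,\neq\,0$ into the construction), so the proposed ``reduction to part (1)'' is incoherent. Moreover, the Cartan--Remmert reduction of the Stein surface $\wt{M}$ is $\wt{M}$ itself, so there is no fibre structure there for Grauert--Remmert to algebrize. What is actually needed, and what the paper does, is group-theoretic plus geometric: by \cite[Proposition 6.4]{BMP} --- and this is where the hypothesis that $\pi_1(M)$ acts trivially on $\pi_2(M)$ is consumed --- it suffices to show that $G$ is not \emph{virtually} a surface group. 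Given a finite-index surface subgroup $G^\circ\,\leq\,G$, one passes to the corresponding finite cover $Z\,\longrightarrow\,X$ (projective by the generalized Riemann existence theorem), which is again a smooth projective surface of general type with universal cover homotopic to $S^2$ and $\pi_1(Z)\,=\,G^\circ$, and applies Proposition \ref{Prop: No compact surface of general type having fundamental group of a compact curve and sphere as universal cover}(2) to $Z$ for a contradiction. The Farrell--Bowditch-type dichotomy behind the cited propositions (nonvanishing $H^2(G,\,\Z G)$ under these height hypotheses forces $G$ to be virtually a surface group) is the essential ingredient absent from your proposal, and no amount of spectral-sequence bookkeeping on $\wt{M}$ alone will substitute for it.
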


\begin{proof}
The proof of the first statement is immediate using Proposition \ref{Prop: No compact surface of general type having fundamental group of a compact curve and sphere as universal cover}(2) followed by \cite[Proposition 6.2]{BMP}.

    Now to prove the second statement, it follows from \cite[Proposition 6.4]{BMP} that it is enough to prove $G$ cannot virtually be a surface group. Suppose $G$ is virtually a surface group and $G$ has a subgroup $G^{\circ}$ such that index $[G\,:\,G^\circ] \,=\,n\, < \, \infty$. Then by the Generalized Riemann Existence Theorem, it is evident that there exists a proper surjective complex analytic map $p\,:\,Z \longrightarrow X$ which is a finite covering map with $$\pi_1(Z)\,\cong\,p_\ast(\pi_1(Z))\,=\,G^\circ.$$ Also, $\deg(p)\,=\,n$ as $[G\,:\,G^\circ] \,=\,n$. By an application of the generalized Riemann existence theorem, it follows that $Z$ is also an algebraic variety since $X$ is projective. Therefore, $Z$ is also a smooth complex projective surface of general type. Again, $Z$ also has the universal cover homotopic to $S^2$ since $X$ has so. Thus, Proposition \ref{Prop: No compact surface of general type having fundamental group of a compact curve and sphere as universal cover}(2) applied to $Z$ yields a similar contradiction as $G^\circ$ is a surface group. This completes the proof.
\end{proof}

\begin{remark}
    In the above theorem, if we assume an affirmative answer to Shafarevich's Conjecture \ref{Conj: SC}, then the universal cover of $X$ being homotopic to $S^2$ automatically implies that its universal cover is a Stein manifold (see the proof of Theorem \ref{Thm: Characterization of quasi-elliptic compact surfaces with infinite fundamental group2}).
\end{remark}

\section*{Acknowledgements} 

We thank the referee for some useful suggestions to improve the exposition in the manuscript.

This work was partly done when both the authors visited Chennai Mathematical Institute (CMI), Chennai, India. We would like to thank CMI for its 
hospitality. The second-named author would like to thank Najmuddin Fakhruddin for the helpful discussions on Shafarevich's conjecture. He is 
thankful to Arnab Roy and Ashutosh Roy Choudhury for their help in the computation of certain spectral sequences; and to Priyankur Chaudhuri for 
suggesting a suitable reference. The first-named author acknowledges the support of a J. C. Bose Fellowship (JBR/2023/000003).

\section*{Statements and Declarations}

There is no conflict of interest regarding this manuscript. No funding was received for it.

\end{document}